\newtheorem{thm}{Theorem}
\newtheorem{dfn}[thm]{Definition}
\newtheorem{lem}[thm]{Lemma}
\newtheorem{prop}{Proposition}
\def\R{\mathbb{R}}
\def\Z{\mathbb{Z}}
\def\C{\mathbb{C}}
\def\N{\mathbb{N}}
\def\c{\mathcal{C}}
\def\s{\sigma}
\def\re{\mathfrak{Re}}
\def\im{\mathfrak{Im}}
\def\f{\varphi}
\def\z{\overline{z}}
\def\e{\varepsilon}
\def\cl{\operatorname{cl}}
\def\Arg{\operatorname{Arg}}
\def\id{\operatorname{id}}
 \newcommand{\set}[1]{\left\{#1\right\}}
\newcommand{\ra}{\rightarrow}
\DeclareMathOperator{\Per}{Per}
\DeclareMathOperator{\diam}{diam}
 \newcommand{\A}{\mathcal{A}}
 \newcommand{\orb}{\text{Orb}}
\begin{document}

%\title{Distributional chaos via heteroclinic solutions}

\title{Complicated dynamics in planar polynomial equations}

\author{Pawe\l \mbox{ W}ilczy\'nski}
\address{Faculty of Mathematics and Information Science, Warsaw University of Technology,
ul. Koszykowa 75  00-662 Warszawa, Poland}
\email{pawel.wilczynski@mini.pw.edu.pl}
%\begin{keyword}
\keywords{distributional chaos, planar polynomial ODE, heteroclinic solutions}
%\end{keyword}
\subjclass[2010]{37B05 (primary), and 37B10, 37C25, 37F20 (secondary)}

\begin{abstract}
We deal with a mechanism of generating distributional chaos in planar nonautonomous ODEs and try to measure chaosity in terms of topological entropy. It is based on the interplay between simple periodic solutions. We prove the existence of infinitely many heteroclinic solutions betwen the periodic ones.
\end{abstract}

\maketitle

\section{Introduction}

The goal of the paper is to investigate the mechanism of generating chaos in the equation
\begin{equation}
\label{egh:og}
\dot{z}=P_n(t,\z) + f(t,z),
\end{equation}
where $P_n$ is a polynomial in $\z$ variable of degree $n$ and it is continuous and periodic in $t$ variable. $f$ is treated as a sufficiently small perturbation.

We deal with $n=2$ and investigate the model equation
%\begin{equation}
%\label{egh:3rown}
%\dot{z}=v_1(t,z)=Re^{i t}(\z^3-1) + f(t,z),
%\end{equation}
%and
%\begin{equation}
%\label{egh:3zdeg}
%\dot{z}=v_2(t,z)=Re^{i t}\z(\z^2-1) + f(t,z),
%\end{equation}
%for degree $3$ and
\begin{equation}
\label{egh}
\dot{z}=v(t,z)=Re^{i t}(\z^2-1) + f(t,z),
\end{equation}
for degree $2$.

Investigated equations are different from the ones described in \cite{SrzedWoj1, Pien1}. We deal with distributional chaos which is not equivalent to the notion of chaos from \cite{SrzedWoj1, Pien1} but in some cases (see \cite{OpWil3}) may be implied by it. We also try to set a lower boundary for the topological entropy for the equations.

The main result of the paper in the case of big leading coefficient is the following theorem.
%\begin{thm}
%\label{thm:glowne1}
%The equation \eqref{eq:egh} is distributionally chaotic, provided that
%\begin{equation*}
%\kappa\leq \frac{1}{80}
%\end{equation*}
%holds. Moreover, it has infinitely many heteroclinic solutions between the stationary ones ($-1$ and $1$).
%\end{thm}

%We also investigate the case of the perturbed equation \eqref{eq:egh} in the form
%\begin{equation}
%\label{egh}
%\dot{z}=e^{i\kappa t}(\z^2-1) + f(\kappa t,z)
%\end{equation}
%and prove the following generalization of the Theorem \ref{thm:glowne1}.
\begin{thm}
\label{thm:glowne}
Let $f\in \c(\R\times\C,\C)$ be $2\pi$-periodic in the first variable i.e. $f(t,z)=f(t+2\pi,z)$ for every $(t,z)\in \R\times \C$. Moreover, let
\begin{equation}
\label{ineq:R}
R\geq 1
\end{equation}
and
\begin{align}
\label{ineq:RN}
N\leq & 0.001R
\end{align}
where
\begin{align}
\label{ineq:N}
|f(t,z)|&\leq N,\\
\label{ineq:lipschitz}
|f(t,z) - f(t,w)| & \leq N |z-w|
\end{align}
be satisfied for every $t\in \R$ and $z, w\in Q=\set{p\in \C: |p|< 3}$.

Then 
\begin{enumerate}
\item
the equation \eqref{egh} is distributionally chaotic, 
\item both trivial solutions $-1$ and $1$ for the case $f\equiv 0$ continue to $2\pi$-periodic ones and there exist infinitely many heteroclinic solutions between them,
\item there exists $\beta>0$ and $\Lambda\subset Q$ invariant with respect to the Poincar\'{e} operator $\f_{(-\beta, 2\pi)}$ such that the dynamical system $(\Lambda, \f_{(-\beta, 2\pi)}\mid _\Lambda)$ is semiconjugated to the dynamical system $(\Sigma, \sigma^2)$, where $\Sigma$ is the sofic shift given by the Figure \ref{fig:shift} and $\sigma: \Sigma \longrightarrow \Sigma$ is the shift operator.
\begin{figure}[!htbp]
\begin{center}
\includegraphics[width=100pt,height=300pt,angle=270]{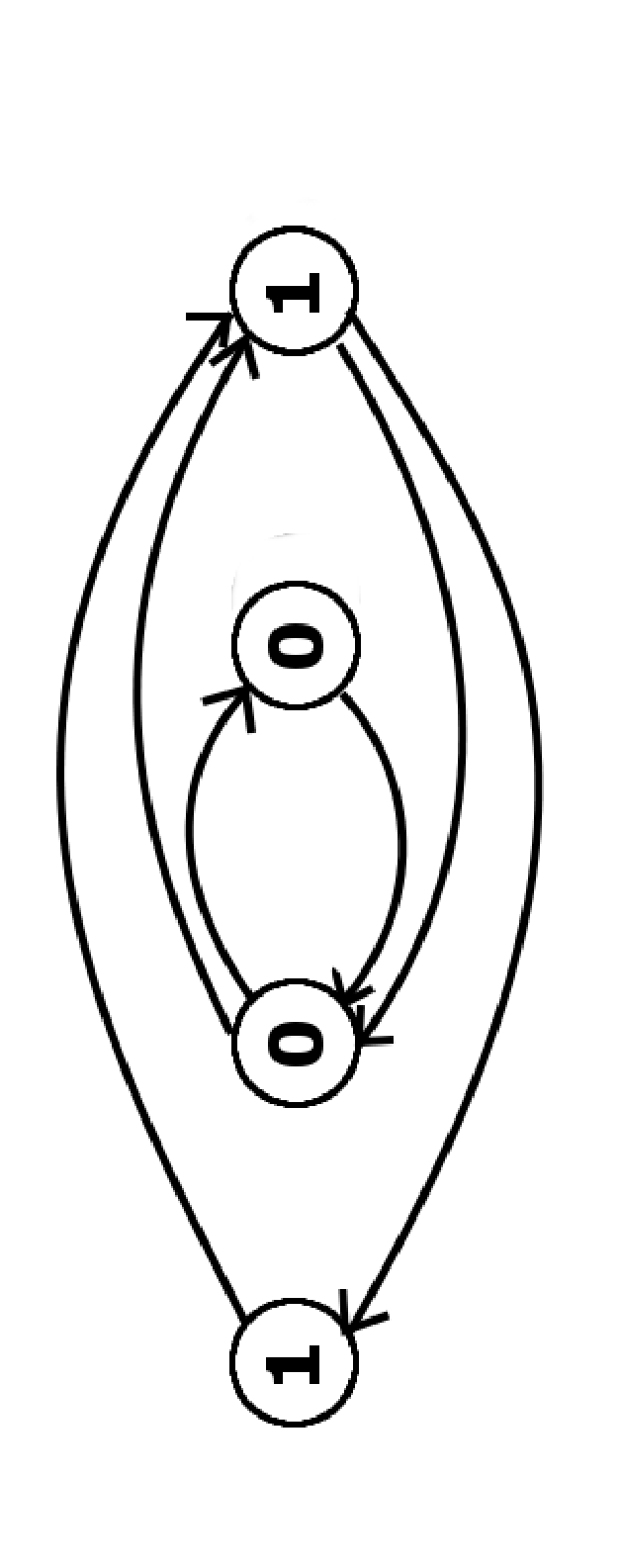}
\end{center}
\caption{Vertex graph of the sofic shift $\Sigma$ for the equation \eqref{egh} with big leading coefficient.}
\label{fig:shift}
\end{figure}
\end{enumerate}
\end{thm}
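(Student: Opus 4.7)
\emph{Proof plan.} The plan is to use the method of periodic isolating segments (in the Srzednicki--W\'ojcik style), together with a symbolic-dynamics extraction. The argument splits naturally into three stages.

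First, I would establish the continuation claim in part~(2). For $f\equiv 0$ the constants $z=\pm 1$ solve \eqref{egh} since $\bar z^2-1$ vanishes there. Linearising at $z=1$ by writing $z=1+w$ yields $\dot w=2Re^{it}\bar w+O(|w|^{2})$, whose fundamental matrix over $[0,2\pi]$ is hyperbolic (and similarly at $z=-1$). Because $|f|\le N$ with Lipschitz constant $N$ and $R\ge 100N$, the perturbation is dominated by the leading field on a uniform disk around each equilibrium inside $Q$. A fixed-point index argument applied to the Poincar\'e map on small disks around $\pm1$ then produces $2\pi$-periodic solutions $z_{\pm}(t)$ of \eqref{egh} continuing $\pm1$, together with local stable/unstable directions at every instant.

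Second, I would build the periodic isolating segments. Writing the leading field $v_{0}(t,z):=Re^{it}(\bar z^{2}-1)$ in a local coordinate centred at $\pm1$, one sees that its direction rotates uniformly with $t$. I would choose two closed topological disks $D_{+},D_{-}\subset Q$ containing $z_{\pm}(t)$, together with a ``bridge'' region $B$ straddling the imaginary axis, and a phase shift $\beta>0$ chosen so that at $t=-\beta$ the field $v_{0}$ aligns with the prescribed entry/exit arcs of $D_{\pm}$ and $B$. The inequalities $R\ge 100$ and $R\ge 100N$ guarantee that $v(t,z)=v_{0}(t,z)+f(t,z)$ still has the sign dictated by $v_{0}$ on the relevant boundary portions, uniformly for $t\in\R$. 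The resulting triple $(D_{+},D_{-},B)$ becomes a periodic isolating block whose entry/exit components realise exactly the transitions read off the graph of Figure~\ref{fig:shift}.

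Third, I would extract the symbolic dynamics. Labelling each block by its position in the graph, the admissible transitions between consecutive Poincar\'e iterates of $\f_{(-\beta,2\pi)}$ are precisely the edges of the sofic graph defining $\Sigma$. A standard covering/horseshoe lemma for isolating segments then yields, for every admissible bi-infinite word, a point in a compact invariant set $\Lambda\subset Q$ realising that itinerary; this gives the semiconjugacy $(\Lambda,\f_{(-\beta,2\pi)}|_{\Lambda})\to(\Sigma,\sigma^{2})$. The exponent $2$ reflects that a single period $t\mapsto t+2\pi$ of the rotating factor $e^{it}$ forces the orbit to cross two matched halves of the isolating geometry, hence two symbols. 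Heteroclinic solutions between $z_{+}$ and $z_{-}$ correspond to words of $\Sigma$ which are eventually constant at one symbol in the past and at the other in the future; there are infinitely many such words, proving the rest of part~(2). Finally, part~(1) follows because $(\Sigma,\sigma^{2})$ contains a full shift as a subsystem and distributional chaos lifts through topological semiconjugacies of this form.

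\emph{Main obstacle.} The delicate step is the second one: designing explicit $D_{\pm}$, $B$ and phase shift $\beta$ so that the condition $R\ge 100\max(1,N)$ is \emph{exactly} enough to control the sign of the outward normal component of $v$ on every boundary arc, uniformly in $t$, \emph{and} so that the induced transition graph coincides with the graph of Figure~\ref{fig:shift} rather than with some subgraph. Once this geometric bookkeeping is in place, the continuation of equilibria and the symbolic extraction are comparatively routine applications of the isolating-segment machinery.
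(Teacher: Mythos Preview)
Your outline captures the geometric picture reasonably well, and the construction of rotating blocks around $\pm 1$ joined by a bridge is indeed what the paper does (with explicit squares $U$, $W$ and a bridge $Z$, and $\beta=0.01$). However, there is a genuine gap in your argument for part~(1), and the same gap undermines your argument for the heteroclinics in part~(2).

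You write that ``distributional chaos lifts through topological semiconjugacies of this form''. This is false as a general principle: a semiconjugacy onto a distributionally chaotic system does \emph{not} by itself force the extension to be distributionally chaotic, because the upper distribution condition $F^{*}_{xy}(t)=1$ for all $t>0$ can fail upstairs even when it holds downstairs (fibers can keep points uniformly apart). The paper does not appeal to any such lifting principle. Instead it invokes the Oprocha--Wilczy\'nski criterion (Theorem~\ref{DC:thm2} in the paper): if $\Phi:\Lambda\to\Sigma_2$ semiconjugates $\f_{(-\beta,2\pi)}$ to $\sigma^{2}$ and there exists a $\sigma$-periodic point $x$ with $\#\Phi^{-1}(\{x\})=1$, then the extension is uniformly DC1. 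The essential extra work, carried out in Lemma~\ref{lem:jedendojeden}, is to show that $\Phi^{-1}(\{0^{\infty}\})$ is a singleton, i.e.\ that the \emph{only} full solution remaining in the block around $+1$ for all time is the continued periodic solution itself. This uniqueness is what your plan is missing; nothing in the isolating-segment machinery supplies it automatically.

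The same omission invalidates your heteroclinic argument. Knowing that a word in $\Sigma$ is eventually constant tells you only that any preimage has its $\alpha$- and $\omega$-limit sets inside the fibers over the constant sequences. To conclude these limit sets are the orbits of $z_{\pm}$ you again need $\#\Phi^{-1}(\{0^{\infty}\})=\#\Phi^{-1}(\{1^{\infty}\})=1$; the paper isolates this step as a separate abstract lemma (Lemma~\ref{lem:hetero}). So both (1) and (2) hinge on the singleton-fiber property, which must be proved by a direct dynamical argument (the saddle structure of the dominant term $2R\bar w$ inside the block) and is not a byproduct of the covering/horseshoe construction.
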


Since the topological entropy of the system $(\Sigma, \sigma^2)$ is given by $\ln \frac{3+\sqrt{5}}{2}\approx \ln(2.615)$ Theorem \ref{thm:glowne} gives greater entropy than $\ln(2)$ which could be obtained by semiconjugacy to the $(\Sigma_2, \sigma)$. Although we now get higher lower estimation of topological entropy in equation \eqref{egh} we do not know whether new semiconjugacy is now conjugacy.

The main result of the paper in the case of small leading coefficient is the following theorem.

\begin{thm}
\label{thm:male2}
There exist $L>0$ and $\varepsilon_0>0$ such that the equation
\begin{equation}
\dot{z}=v_1(t,z)= L\left[-iz+\z^2-\varepsilon e^{it}\right]
\label{egh:male2}
\end{equation}
is distributionally chaotic provided that
\begin{equation}
\label{ineq:eps}
0< \varepsilon\leq \varepsilon_0.
\end{equation}
Moreover, there exists
$\beta>0$ and $\Lambda\subset \C$ invariant with respect to the Poincar\'{e} operator $\f_{(-\beta, 2\pi)}$ such that the dynamical system $(\Lambda, \f_{(-\beta, 2\pi)}\mid _\Lambda)$ is semiconjugated to the dynamical system $\left(\hat{\Sigma}, \sigma\right)$, where $\hat{\Sigma}$ is the sofic shift given by the Figure \ref{fig:shift2} and $\sigma: \hat{\Sigma} \longrightarrow \hat{\Sigma}$ is the shift operator.
\begin{figure}[!htbp]
\begin{center}
\includegraphics[width=200pt,height=150pt,angle=0]{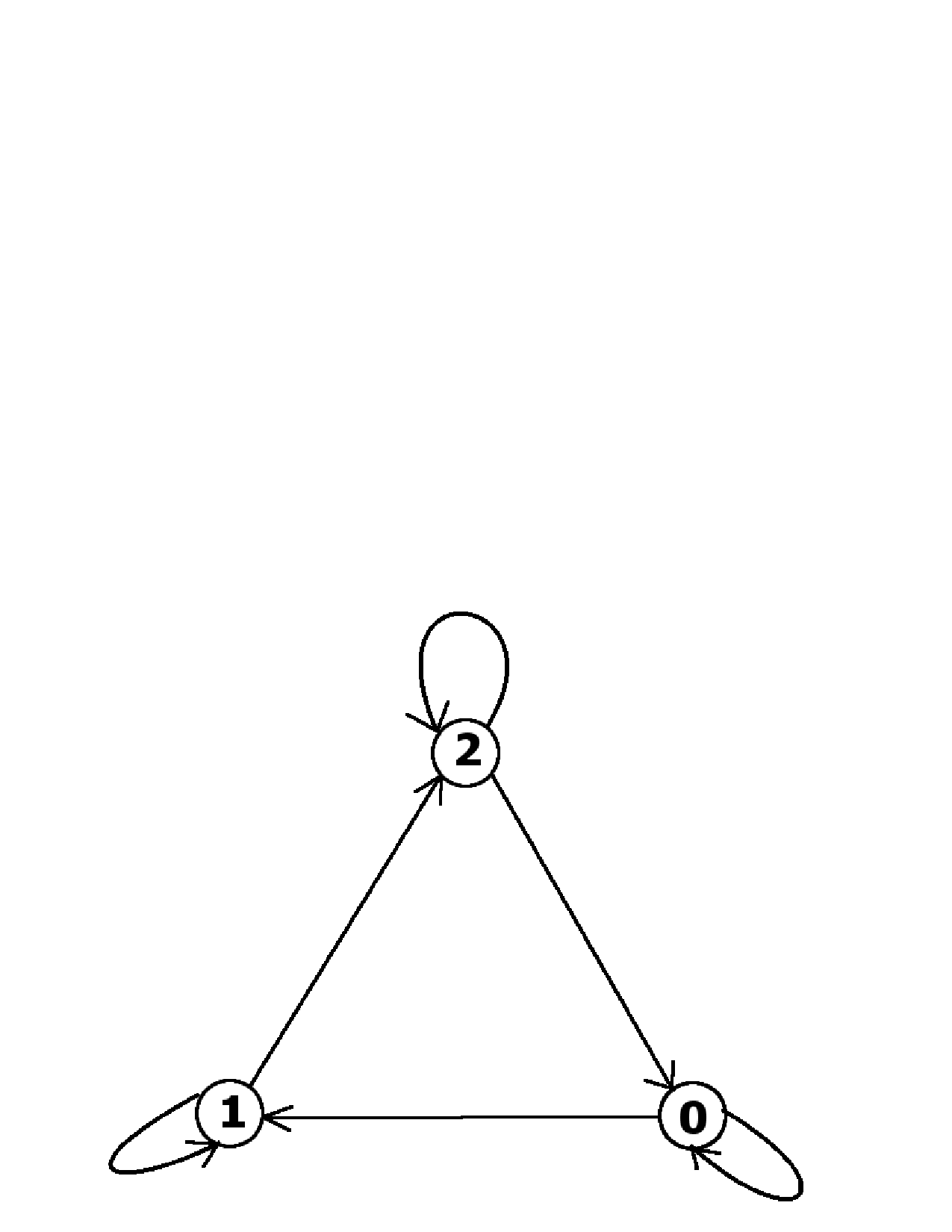}
\end{center}
\caption{Vertex graph of the sofic shift $\hat{\Sigma}$ for the equation \eqref{egh:male2} with small leading coefficient.}
\label{fig:shift2}
\end{figure}

\end{thm}

The change of variables 
\begin{equation}
\label{eq:zmiana}
u(t)=\frac{1}{\varepsilon}e^{\frac{i}{3}t}z\left( \frac{2}{3}t\right)
\end{equation}
in the equation \eqref{egh:male2} gives
\begin{equation}
\dot{u}=v_2(t,u)=\frac{2}{3}L\varepsilon e^{i t}(\overline{u}^2-1) + \left(\frac{1}{2}- L\right) iu.
\label{eq:glowne:m2}
\end{equation}

Topological entropy of the system $\left(\hat{\Sigma}, \sigma\right)$ equals $\ln 2$. But since the time scaling in \eqref{eq:zmiana}, topological entropy for discrete system generated by \eqref{eq:glowne:m2} equals $\frac{2}{3}\ln 2$, so it is lower than in the case of $(\Sigma, \sigma^2)$.

The most interesting case seems to be $L=\frac{1}{2}$ when the equation \eqref{eq:glowne:m2} reduces to 
\begin{equation*}
\dot{u}=v_2(t,u)=\frac{\varepsilon}{3} e^{i t}(\overline{u}^2-1).
\end{equation*}
But we expect constant $L$ to be much higher in Theorem \ref{thm:male2}.

\section{Definitions}

\subsection{Basic notions}
Let $(X,f)$ be a dynamical system on a compact metric space such that $f$ is a homeomorphism. By
\emph{(full) orbit} of $x$ we mean the set
\begin{equation*}
\orb (x,f)=\set{\dots, f^{-2} (x), f^{-1}(x), x, f(x), f^2 (x), \dots}.
\end{equation*}

A point $y\in X$ is an \emph{$\omega$-limit
point} (\emph{$\alpha$-limit point}) of a point $x$ if it is an accumulation point of the sequence
$x,f(x),f^2(x),\dots$ (resp. $x,f^{-1}(x),f^{-2}(x),\dots$). The set of all $\omega$-limit points ($\alpha$-limit points) of $x$
is called \emph{$\omega$-limit set} (resp. \emph{$\alpha$-limit set}) of $x$ and denoted
$\omega_f(x)$ (resp $\alpha_f(x)$). A point $p\in X$ is said to be \emph{periodic} if $f^n(p)=p$ for
some $n\geq 1$. The set of all periodic points for $f$ is denoted by
$\Per(f)$.

Let $(X,f)$, $(Y,g)$ be dynamical systems on compact metric spaces.
A continuous map $\Phi : X \ra Y$ is called a \emph{semiconjugacy} (or a \emph{factor map})
between $f$ and $g$ if $\Phi$ is surjective and $\Phi \circ f = g \circ \Phi$.

Let $Y$ be a topological space. For any set $Z\subset \R \times Y$ and $a,b,t \in \R$, $a<b$ we define
\begin{align*}
Z_t&=\{x\in Y: (t,x)\in Z\}, \\
Z_{[a,b]}&=\{(t,x)\in Z: t\in [a,b]\}.
\end{align*}

We denote by $\N$ the set of positive integers.

Let $c\in \C$ and $r>0$. Then $\overline{B}(c,r)\subset \C$ denotes the closed ball centered at~$c$ with radius $r$.

\subsection{Shift spaces}

Let $\A=\set{0,1,\ldots,n-1}$. We denote
$
\Sigma_n=\A^\Z .
$

By \emph{a word}, we mean any element of a free monoid $\A^*$ with the set of
generators equal to $\A$. If $x\in \Sigma_n$ and $i<j$ then by
$x_{[i,j]}$ we mean a sequence $x_i, x_{i+1}, \dots, x_j$. We may
naturally identify $x_{[i,j]}$ with the word $x_{[i,j]}=x_i
x_{i+1}\dots x_j\in \A^*$. It is also very convenient to denote $x_{[i,j)}=x_{[i,j-1]}$.

We introduce a metric $\rho$ in $\Sigma_n$ by
\begin{equation*}
\rho(x,y)=2^{-k}, \text{ where } k=\min\set{m\geq 0: x_{[-m,m]}\neq y_{[-m,m]}}.
\end{equation*}

If
$a_{-k}\ldots a_{-1}a_0a_1 \dots a_m\in \A^*$ then we define so called
\emph{cylinder set}:
\begin{eqnarray*}
{}[a_{-k}\dots a_m]&=&\set{x\in \Sigma_n \; : \; x_{[-k,m]}=a_{-k}\ldots a_{-1}a_0a_1 \dots a_m}.
\end{eqnarray*}
It is well known that cylinder sets form a neighborhood basis for
the space $\Sigma_n$.

By the $0^\infty$ we denote the element $x\in\Sigma_n$ such that $x_i=0$ for all $i\in \Z$. The usual map on $\Sigma_n$ is the shift map $\sigma$ defined by $\sigma(x)_i=x_{i+1}$ for all $i$. Dynamical system
$(\Sigma_n,\sigma)$ is called full two-sided shift over $n$ symbols.

\subsection{Dynamical systems and Wa\. zewski method}
Let $X$ be a topological space and $W$ be a subset of $X$. Denote by $\cl W$ the closure of $W$. The following definitions come from \cite{Srzed70}.

Let $D$ be an open subset of $\R\times X$. By a \emph{local flow} on $X$ we mean a continuous map $\phi:D\longrightarrow X$, such that three conditions are satisfied:
\begin{itemize}
\item[i)] $I_x=\{t\in \R:\; (t,x)\in D\}$ is an open interval $(\alpha_x,\omega_x)$ containing $0$, for every $ x\in X$,
\item[ii)] $ \phi(0,x)=x$, for every $x\in X$,
\item[iii)] $\phi(s+t,x)=\phi(t,\phi(s,x))$, for every $ x\in X$ and $s, t\in \R$ such that $s \in I_x$ and $ t\in I_{\phi(s,x)}$.
\end{itemize}
In the sequel we write $\phi_t (x)$ instead of $\phi (t,x)$.

Let $\phi$ be a local flow on $X$,  $x\in X$ and $W\subset X$. We call the set
\begin{align*}
\phi^+(x)=\phi([0,\omega_x)\times\{x\})\index{$\phi^+(x)$}
\end{align*}
the \emph{positive semitrajectory} of $x\in X$.

We distinguish three subsets of $W$ given by
\begin{align*}
W^-=&\{x\in W: \phi([0,t]\times\{x\})\not \subset W, \text{ for every }t>0\},\\
W^+=&\{x\in W: \phi([-t,0]\times\{x\})\not \subset W, \text{ for every }t>0\},\\
W^*=&\{x\in W: \phi(t,x)\not \in W, \text{ for some }t>0\}.
\end{align*}
It is easy to see that $W^-\subset W^*$. We call $W^-$ the \emph{exit set of $W$}, and $W^+$ the \emph{entrance set of $W$}.

We call $W$ a \emph{Wa\. zewski set} provided
\begin{enumerate}
\item if $x\in W$, $t>0$, and  $\phi([0,t]\times\{x\})\subset \cl W$ then  $\phi([0,t]\times\{x\})\subset W$,
\item $W^-$ is closed relative to $W^*$.
\end{enumerate}

\begin{prop}
\label{prop:wazewski}
If both $W$ and $W^-$ are closed subsets of $X$ then $W$ is a Wa\. zew\-ski set.
\end{prop}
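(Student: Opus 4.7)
The plan is to verify the two defining axioms of a Wa\.zewski set in turn, relying only on the hypotheses that $W$ and $W^-$ are closed subsets of $X$; both verifications will be purely topological, with no dynamical content required.

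First I would handle axiom (1): since $W$ is closed in $X$, we have $\cl W = W$, so the hypothesis $\phi([0,t]\times\{x\})\subset \cl W$ reduces to $\phi([0,t]\times\{x\})\subset W$, which is exactly the conclusion. Axiom (1) therefore holds trivially, with no need to invoke any property of the local flow $\phi$.

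Next, for axiom (2), I would argue that $W^-$ is closed in the subspace topology on $W^*$. The excerpt already notes the inclusion $W^-\subset W^*$ (indeed, if $x\in W^-$ then for each $t>0$ some point of $\phi([0,t]\times\{x\})$ fails to lie in $W$, and since $\phi(0,x)=x\in W$ this exit time must be strictly positive, placing $x$ in $W^*$). Hence one can write $W^- = W^- \cap W^*$, and because $W^-$ is closed in $X$ by hypothesis, this realizes $W^-$ as the intersection of an $X$-closed set with $W^*$, i.e.\ as a closed subset of $W^*$ in the subspace topology.

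There is no genuine obstacle in the argument; the proposition simply records the convenient fact that the stronger assumption of closedness in the ambient space $X$ collapses both of the subtler requirements in the general definition (the replacement of $W$ by $\cl W$ in (1) and the relative rather than absolute closedness in (2)) into tautologies.
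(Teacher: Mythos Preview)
Your proof is correct. The paper does not supply a proof of this proposition at all---it is stated without justification and then used later---so there is nothing to compare against; your argument is the standard one and would fill this gap in the exposition cleanly.
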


The function $\sigma: W^*\longrightarrow [0,\infty)$
\begin{equation*}
\sigma(x)=\sup \{t\in [0,\infty): \phi([0,t]\times\{x\})\subset W\}
\end{equation*}
is called the \emph{escape-time function of $W$}.

The following lemma is called the Wa\. zewski lemma.
\begin{lem}[{\cite[Lemma 2.1 (iii)]{Srzed70}}]
\label{lem:wazewski}
Let $W$ be a Wa\. zewski set and $\sigma$ be its escape-time function. Then $\sigma$ is continuous.
\end{lem}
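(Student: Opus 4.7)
The plan is to prove that $\sigma\colon W^*\to[0,\infty)$ is continuous by establishing upper and lower semicontinuity separately.

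For upper semicontinuity at $x_0\in W^*$ with $\sigma(x_0)=t_0$, given $\delta>0$ I would first exhibit a time $s\in(t_0,t_0+\delta)$ with $\phi(s,x_0)\notin\cl W$. Openness of $X\setminus\cl W$ together with continuity of the local flow then produces a neighborhood $U$ of $x_0$ on which $\phi(s,\cdot)\notin W$, forcing $\sigma(y)\le s<t_0+\delta$ for every $y\in U\cap W^*$. To produce such an $s$, I would apply Wa\. zewski condition~(1) to $x_0\in W$ with $t:=t_0+\delta/2$: if $\phi([0,t]\times\{x_0\})$ were contained in $\cl W$, then condition~(1) would force it into $W$, contradicting $\sigma(x_0)=t_0<t$. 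Hence some point of the segment escapes $\cl W$, and since $\phi(s,x_0)\in W\subset\cl W$ for every $s<t_0$, the offending parameter must lie in $(t_0,t]$.

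For lower semicontinuity I would argue by contradiction: suppose a sequence $y_n\to x_0$ in $W^*$ satisfies $t_n:=\sigma(y_n)\to L<\sigma(x_0)$ (passing to a subsequence if needed), and set $z_n:=\phi(t_n,y_n)$. The heart of the argument---and the step I expect to be the main obstacle---is to show that $z_n\in W^-$. First, taking limits in $\phi([0,t_n)\times\{y_n\})\subset W$ as $t\to t_n^-$ gives $\phi([0,t_n]\times\{y_n\})\subset\cl W$, and condition~(1) upgrades this to containment in $W$; in particular $z_n\in W$. Second, the definition of $\sigma(y_n)=t_n$ furnishes, for every $u>0$, some $s\in(t_n,t_n+u]$ with $\phi(s,y_n)\notin W$; rewriting $\phi(s,y_n)=\phi(s-t_n,z_n)$ yields $\sigma(z_n)=0$, so $z_n\in W^-$.

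Finally, continuity of $\phi$ gives $z_n\to z_0:=\phi(L,x_0)$. Since $L<\sigma(x_0)$, one has $z_0\in W$ and $\sigma(z_0)=\sigma(x_0)-L>0$, so $z_0\in W^*\setminus W^-$. But $z_n\in W^-$ with $z_n\to z_0\in W^*$ contradicts Wa\. zewski condition~(2), that $W^-$ is closed relative to $W^*$. This rules out any failure of lower semicontinuity. The two Wa\. zewski conditions play complementary roles in this argument: (1) forces endpoints of orbit segments back into $W$, so that the escape time is actually witnessed by an exit, and (2) then prevents immediate-exit points from accumulating on non-exit points.
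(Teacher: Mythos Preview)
Your argument is correct and is essentially the standard proof of the Wa\.zewski escape-time lemma: upper semicontinuity via condition~(1) to escape $\cl W$ just past the exit time, and lower semicontinuity by contradiction, pushing a hypothetical drop in $\sigma$ to a sequence in $W^-$ converging to a point of $W^*\setminus W^-$, which violates condition~(2).

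There is, however, nothing in the present paper to compare against: the lemma is merely quoted from \cite[Lemma~2.1(iii)]{Srzed70} and no proof is given here. Your write-up would serve perfectly well as a self-contained replacement for that citation. One small technical remark: in the upper-semicontinuity step you should also note that, because the domain $D$ of the local flow is open, the time $s$ you produce lies in $I_y$ for all $y$ in a sufficiently small neighborhood of $x_0$, so that $\phi(s,y)$ is actually defined; and in the lower-semicontinuity step the identity $\sigma(z_0)=\sigma(x_0)-L$ deserves a one-line justification via the cocycle property of $\phi$.
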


Finally, we state one version of the Wa\. zewski theorem.
\begin{thm}[{\cite[Corollary 2.3]{Srzed70}}]
\label{t-wazewskiego}
Let $\phi$ be a local flow on $X$, $W\subset X$ be a Wa\. zewski set and $Z\subset W$. If $W^-$ is not a strong deformation retract of $Z\cup W^-$ in $W$ then there exists an $x_0\in Z$ such that $\phi^+(x_0)\subset W$.
\end{thm}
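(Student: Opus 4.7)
The plan is to prove part~(3) first and derive (1) and (2) from it. All three conclusions rest on an application of the Wa\.zewski retract theorem (Theorem~\ref{t-wazewskiego}) to a family of sets constructed by hand from the geometry of the rotating field $Re^{it}(\z^2-1)$.

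\textbf{Phase portrait and periodic continuation.} Away from $z=\pm 1$ the unperturbed vector field $v_0(t,z)=Re^{it}(\z^2-1)$ has modulus of order $R$, while at the two zeros $z=\pm 1$ it vanishes. The hypothesis $R\geq 100N$ makes the perturbation negligible compared with $v_0$ outside small neighborhoods of $\pm 1$, and a standard topological index / Wa\.zewski argument on a small block around each equilibrium continues $\pm 1$ to $2\pi$-periodic solutions of the perturbed equation; this gives the first half of part~(2).

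\textbf{Symbolic construction.} The appearance of $\s^2$ rather than $\s$ in the statement reflects that over each half-period the topology of the field reverses (since $e^{i(t+\pi)}=-e^{it}$), so one symbolic step corresponds to a half-period of the ODE and a full iterate of the Poincar\'e operator corresponds to two symbols. Choose $\beta>0$ small so that at time $-\beta$ the disk $Q$ can be partitioned into finitely many closed ``cells'' $C_0,\dots,C_{n-1}$ labeled by the vertices of the graph of $\S$, with two distinguished cells containing small neighborhoods of $-1$ and $1$. For each admissible word $a_{-k}\dots a_m$ in $\S$ I build a Wa\.zewski set $W=W_{a_{-k}\dots a_m}$ in the extended phase space over $[-\beta-k\pi,\,-\beta+(m+1)\pi]$, consisting of pairs $(t,z)$ whose partial trajectory of~\eqref{egh} has visited the cells $C_{a_{-k}},C_{a_{-k+1}},\dots$ in order as $t$ crosses successive half-periods. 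Estimates \eqref{ineq:R}--\eqref{ineq:lipschitz} guarantee that both $W$ and $W^-$ are closed, so Proposition~\ref{prop:wazewski} applies; they also force the field to point transversally outward on $W^-$ in such a way that $W^-$ is not a strong deformation retract of a suitable arc $Z\subset W$ coming from the unstable fiber of the initial cell. Theorem~\ref{t-wazewskiego} then yields a trajectory staying inside $W$ for the entire window. A compactness / diagonal argument delivers, for every bi-infinite admissible $s\in\S$, a trajectory realizing the symbol sequence.

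\textbf{Semiconjugacy and remaining conclusions.} Define $\Lambda\subset Q$ as the set of $z$ at time $-\beta$ whose orbit under the Poincar\'e operator $\f_{(-\beta,2\pi)}$ is coded by a bi-infinite admissible sequence, and set $\Phi:\Lambda\to\S$ to be that coding. Continuity of $\Phi$ is a consequence of the escape-time continuity in Lemma~\ref{lem:wazewski} applied to each cell; the intertwining $\Phi\circ\f_{(-\beta,2\pi)}=\s^2\circ\Phi$ is built in; and surjectivity was established above. This proves~(3). Heteroclinic connections between the two continuations correspond to admissible sequences that are eventually $0^\infty$ on both ends but anchored at the cells around $-1$ and $1$; since arbitrarily long admissible connector words exist in $\S$, one obtains infinitely many heteroclinic solutions, completing~(2). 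Part~(1) then follows because the sofic shift $\S$ has positive entropy and is distributionally chaotic, and distributional chaos lifts through the continuous factor $\Phi$ to $(\Lambda,\f_{(-\beta,2\pi)})$ and hence to~\eqref{egh}.

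\textbf{Main obstacle.} The hardest step is showing that the hand-built Wa\.zewski sets realize exactly the edges of the graph in Figure~\ref{fig:shift} rather than the edges of the full shift $\Sigma_2$. This requires precise control of where each cell $C_j$ is sent by the half-period map of the perturbed system, and it is here that the explicit constant in $R\geq 100N$ is spent: it must guarantee that every cell maps strictly into its admissible successors and misses the others, so that the coding is unambiguous and the admissible-transition graph coincides with that of Figure~\ref{fig:shift}.
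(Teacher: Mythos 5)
Your proposal does not address the statement at hand. The statement labelled \ref{t-wazewskiego} is the classical Wa\.zewski retract theorem, which the paper does not prove at all but imports from \cite[Corollary 2.3]{Srzed70}. What you have written is instead an outline of the proof of the main result, Theorem \ref{thm:glowne}, and in the middle of it you invoke Theorem \ref{t-wazewskiego} itself (``Theorem \ref{t-wazewskiego} then yields a trajectory staying inside $W$\dots''). As a proof of the quoted statement this is off-target and circular: the very result to be established is used as a black box, and none of the material about cells, the sofic shift, the constant $R\geq 100N$, or distributional chaos bears on it.

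A proof of the statement itself is short and purely topological, by contraposition. Suppose no $x_0\in Z$ satisfies $\phi^+(x_0)\subset W$. Then every point of $Z$ eventually leaves $W$, so $Z\subset W^*$, and clearly $W^-\subset W^*$; hence the escape-time function $\sigma$ is defined on $Z\cup W^-$, vanishes on $W^-$, and is continuous by Lemma \ref{lem:wazewski}. The Wa\.zewski-set axioms give, for every $x\in W^*$, that the segment $\phi([0,\sigma(x)]\times\set{x})$ stays in $W$ and that its endpoint $\phi(\sigma(x),x)$ belongs to $W^-$. Consequently $H(s,x)=\phi(s\,\sigma(x),x)$ for $(s,x)\in[0,1]\times(Z\cup W^-)$ is a continuous homotopy with values in $W$ which equals the identity at $s=0$, fixes $W^-$ pointwise for all $s$ (since $\sigma\equiv 0$ there), and sends $Z\cup W^-$ into $W^-$ at $s=1$; thus $W^-$ is a strong deformation retract of $Z\cup W^-$ in $W$, contradicting the hypothesis. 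This is the argument behind \cite[Corollary 2.3]{Srzed70}, and it (or an explicit deferral to that citation, as the paper makes) is what your write-up would need to contain.
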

\noindent (For the definition of the strong deformation retract see e.g. \cite{Dold}.)

\subsection{Processes}

Let $X$ be a topological space and $\Omega \subset \R\times X
\times \R$ be an open set.

By a \emph{local process} on $X$ we mean a continuous map
$\varphi:
\Omega \longrightarrow X$, such that three conditions
are satisfied: \begin{itemize}
\item[i)] $I_{(\sigma,x)}=\{t\in
\R:\; (\sigma,x,t)\in \Omega\}$ is an open interval containing $0$, for every $\sigma\in \R$ and $ x\in X$,
\item[ii)] $ \f(\sigma,\cdot,0)=\id_X$, for every $\sigma\in \R$,
\item[iii)] $\f(\sigma,x,s+t)=\f(\sigma+s,\f(\sigma,x,s),t)$, for every $ x\in X$, $\sigma\in \R$ and $s, t\in \R$ such that $s \in I_{(\sigma,x)}$ and $ t\in I_{(\sigma+s,\f(\sigma,x,s))}$. \end{itemize}
For abbreviation, we write $\f_{(\sigma,t)}(x)$ instead of
$\f(\sigma,x,t)$. If $\Omega = \R\times X \times \R$, then the process $\f$ is called \emph{global}.

Local process $\f$ on $X$ generates a local flow $\phi$ on $\R\times X$ by the formula
\begin{equation}
\label{eq:processflow}
\phi(t,(\sigma,x))=(\sigma+t,\f(\sigma, x,t)).
\end{equation}

Let $M$ be a smooth manifold and let $v: \R \times M
\longrightarrow TM$ be a time-dependent vector field. We assume
that $v$ is so regular that for every $(t_0,x_0)\in \R\times M$
the Cauchy problem
\begin{align}
\label{1row}
&\dot{x}= v(t,x),\\
\label{warp}
&x(t_0)= x_0
\end{align}
has unique solution. Then the equation \eqref{1row} generates a
local process $\f$ on $X$ by
$\f_{(t_0,t)}(x_0)=x(t_0,x_0,t+t_0)$, where $ x(t_0,x_0,\cdot)$ is the
solution of the Cauchy problem \eqref{1row}, \eqref{warp}.

Let $T$ be a positive number. In the sequel $T$ denotes the period. We assume that $v$ is $T$-periodic
in $t$. It follows that the local process $\f$ is $T$-periodic,
i.e.,
\begin{align*}
\f_{(\sigma+T,t)}=\f_{(\sigma,t)} \text{ for all } \sigma, t\in \R,
\end{align*}
hence there is a one-to-one correspondence between $T$-periodic
solutions of \eqref{1row} and fixed points of the Poincar\'{e} map
$\f_{(0,T)}$.

\subsection{Distributional chaos}
Let $\mathbb{N}$ denote the set of positive integers and let $f$ be a
continuous self map of a compact metric space $(X,\rho)$. We define
a function $\xi_f:X\times X \times \mathbb{R} \times \mathbb{N} \longrightarrow \mathbb{N}$
by:
\begin{equation*}
\xi_f (x,y,t,n) = \# \set{i\;:\; \rho(f^i(x),f^i(y))<t \;,\; 0\leq i
<n}
\end{equation*}
where $\# A$ denotes the cardinality of the set $A$. By the means of $\xi_f$
we define the following two functions:
%Let us denote by $F_{xy}$
%and $F^*_{xy}$ the functions:
\begin{equation*}
F_{xy}(f,t)=\liminf_{n \rightarrow \infty} \frac{1}{n} \xi_f
(x,y,t,n),\quad \quad F^*_{xy}(f,t)=\limsup_{n \rightarrow \infty}
\frac{1}{n} \xi_f (x,y,t,n).
\end{equation*}
For brevity, we often write $\xi$, $F_{xy}(t)$, $F^*_{xy}(t)$ instead of $\xi_f$, $F_{xy}(f,t)$, $F^*_{xy}(f,t)$ respectively.

Both functions $F_{xy}$ and $F^*_{xy}$ are nondecreasing,
$F_{xy}(t)=F^*_{xy}(t)=0$ for $t<0$ and $F_{xy}(t)=F^*_{xy}(t)=1$
for $t>\diam X$. Functions $F_{xy}$ and $F^*_{xy}$ are called
\emph{lower} and \emph{upper distribution} functions, respectively.

\begin{dfn}\label{dcdef}
A pair of points $(x,y)\in X\times X$ is called \emph{distributionally
chaotic (of type 1)} if
\begin{enumerate}
\item $F_{xy}(s)=0$ for some $s>0$, \label{cond:dc1a}
\item $F^*_{xy}(t)=1$ for all $t>0$ \label{cond:dc1b}.
\end{enumerate}

A set containing at least two points is called \emph{distributionally scrambled set of type 1} (or \emph{$d$-scrambled
set} for short) if any pair of its distinct points is distributionally
chaotic.

A map $f$ is \emph{distributionally chaotic (DC1)} if it has an
uncountable $d$-scramb\-led set. Distributional chaos is said to be
uniform if a constant $s$ from condition (\ref{cond:dc1a}) may be
chosen the same for all the pairs of distinct points of
$d$-scrambled set.
\end{dfn}

We remark here that the definition of
distributional chaos was introduced to extend approach proposed
by Li and Yorke in their famous paper \cite{LiYorke}. Then it is
clear why we use the name $d$-scrambled set. Namely each
$d$-scrambled set is also scrambled set as defined by Li and Yorke.

We also should mention that our notation is a slightly different compared to that introduced
by Schweizer and Sm\'{\i}tal
(founders of distributional chaos) in \cite{SchwSm}. It is mainly because
the definition of distributional chaos passed a very long journey since its introduction (even its name
changed as it was originally called strong chaos). The definition we present is one of the strongest
possibilities \cite{BaSmiSte} and is usually called distributional chaos of type $1$ to be distinguished from other two
weaker definitions - DC2 and DC3.

\begin{dfn}
We say that a $T$-periodic local process $\varphi$ on $M$ is \emph{(uniform)
distributionally chaotic} if there exists compact set $\Lambda
\subset M$ invariant for the Poincar\'{e} map $P_T=\varphi_{(0,T)}$
such that $P_T|_\Lambda$ is (uniform) distributionally chaotic.

We say that the equation \eqref{1row} is \emph{(uniform)
distributionally chaotic} if it generates a local process which is
(uniform) distributionally chaotic.
\end{dfn}

\subsection{Useful facts}

We recall Theorem 5 from \cite{OpWil3} which is crucial in the proof of Theorem \ref{thm:glowne} (it is also possible to use more general \cite[Theorem 11]{OpWil4}).

\begin{thm}\label{DC:thm2}
Let $f$ be a continuous map from a compact metric space
$(\Lambda,\rho)$ into itself and let $\Phi: \Lambda \ra \Sigma_2$ be
a semiconjugacy between $f$ and $\sigma$.

If there exists $x\in \Sigma_2 \cap \Per (\sigma)$ such that $\#
\Phi^{-1}(\set{x})=1$ then $ f $ is distributionally chaotic and
distributional chaos is uniform.
\end{thm}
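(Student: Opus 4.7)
The plan is to lift distributional chaos from $(\Sigma_2,\sigma)$, using the single-point fiber over $x$ as an anchor and exploiting both directions of the semiconjugacy. Let $p$ be the $\sigma$-period of $x$, put $O=\{x,\sigma x,\ldots,\sigma^{p-1}x\}$, and let $\Phi^{-1}(x)=\{z\}$; the intertwining $\Phi\circ f=\sigma\circ\Phi$ forces $f^pz=z$, so $z$ lies on a period-$p$ orbit $O'=\{z,fz,\ldots,f^{p-1}z\}$ projecting onto $O$. Compactness of $\Lambda,\Sigma_2$ together with continuity of $\Phi$ and the single-point fiber give upper semicontinuity of $\Phi^{-1}$ at $x$, which combined with uniform continuity of $f^0,\ldots,f^{p-1}$ yields
\begin{equation*}
d_{\Sigma_2}(\sigma^iy,O)<\delta_*(\epsilon)\ \Longrightarrow\ \rho(f^iw,O')<\epsilon\quad\text{whenever }\Phi(w)=y.
\end{equation*}
In the opposite direction, plain continuity of $\Phi$ at each $f^rz$ supplies $\eta_0>0$ with
\begin{equation*}
d_{\Sigma_2}(\sigma^iy,O)\geq 2^{-p}\ \Longrightarrow\ \rho(f^iw,O')\geq\eta_0.
\end{equation*}

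Next I would fix a symbol $u\in\{0,1\}$ such that the constant sequence $u^\infty$ is at $\Sigma_2$-distance at least $2^{-(p-1)}$ from every element of $O$ (either $x$ is constant and $u$ is the opposite symbol, or $x$ uses both symbols and any constant differs from every $\sigma^jx$ somewhere in $[0,p-1]$). Pick $\ell_1<\ell_2<\cdots$ with $N_k:=\ell_1+\cdots+\ell_k$ and $\ell_k/N_k\to1$; for each $a\in A\subset\{0,1\}^{\mathbb N}$ define $y_a\in\Sigma_2$ block-wise by inserting, on $[N_{k-1},N_k)$, the periodic $x$-pattern if $a_k=0$ and the word $u\cdots u$ of length $\ell_k$ if $a_k=1$, padding by the $x$-pattern on $(-\infty,0)$. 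Pick any $w_a\in\Phi^{-1}(y_a)$ by surjectivity; the map $a\mapsto w_a$ is injective since $\Phi(w_a)=y_a$. Select $A$ uncountable so that for all distinct $a,b\in A$ both $\{k:a_k=b_k=0\}$ and $\{k:a_k\neq b_k\}$ are infinite (e.g.\ insist $a_{2k}=0$ and let $(a_{2k+1})$ range over an almost disjoint family).

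For distinct $a,b\in A$ I verify the DC1 conditions along the subsequence $n=N_k$. On matching $0$-blocks, bulk indices at depth $L(\epsilon)$ inside the block satisfy $\sigma^iy_a,\sigma^iy_b\in\overline{B}(\sigma^rx,\delta_*(\epsilon))$ with $r=(i-N_{k-1})\bmod p$, whence $\rho(f^iw_a,f^iw_b)<2\epsilon$ by the first implication; since $\ell_k/N_k\to1$ while $L(\epsilon)$ is fixed, the density of such indices tends to $1$ along the subsequence of $k$'s with $a_k=b_k=0$, proving $F^{*}_{w_aw_b}(t)=1$ for every $t>0$. On mismatched blocks the bulk has $\sigma^iy_a$ close to $O$ but $\sigma^iy_b$ at distance $\geq2^{-p}$ from $O$, so the two implications combine to give $\rho(f^iw_a,f^iw_b)\geq\eta_0-\epsilon\geq s:=\eta_0/2$ provided $\epsilon<\eta_0/2$ is fixed once and for all; the complement contributes at most $N_{k-1}+O(L(\epsilon))=o(N_k)$ indices with $\rho<s$, giving $F_{w_aw_b}(s)=0$. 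Since $s$ is universal across all pairs, the DC1 is uniform. The main obstacle is the interplay of scales: $L(\epsilon)$ must grow with $-\log\epsilon$ to make $F^{*}(t)=1$ hold for all $t>0$, while $\epsilon$ must be fixed below $\eta_0/2$ once and for all to keep $s$ uniform; the rapid growth of $\ell_k$ reconciles these by making the bulk asymptotically dominate each block regardless of the choice of $\epsilon$.
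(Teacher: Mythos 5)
The paper itself does not prove this statement: it is recalled verbatim from \cite{OpWil3} (Theorem 5) and used as a black box, so there is no in-paper proof to compare against. Your argument is correct and is essentially the standard route of that reference: upper semicontinuity of $\Phi^{-1}$ at the singleton fibre over $x$, plus plain continuity of $\Phi$ along the orbit $O'$ of the unique preimage $z$, to transfer closeness and separation between $\Sigma_2$ and $\Lambda$, and block-built sequences indexed by an almost disjoint family to produce the uncountable, uniformly $d$-scrambled set. One quantifier to tidy: your first displayed implication is not valid for all $i$ (fibres over $\sigma^j x$, $j\neq 0$, need not be small), but it does hold for $i\geq p-1$ in the refined form you actually use --- $\sigma^i y$ within $\delta_*$ of the specific point $\sigma^r x$ implies $f^i w$ within $\epsilon$ of $f^r z$, obtained by shifting back $r\leq p-1$ steps to land near $x$ and then applying uniform continuity of $f^r$ --- and since you only invoke it at bulk indices deep inside blocks, the proof is unaffected.
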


\section{Main calculations}

\begin{proof}[Idea of the proof of Theorem \ref{thm:male2}]
There are many symmetries in the equation \eqref{egh:male2} e.g. function $z: (a,b)\rightarrow \C$ is a solution of the equation \eqref{thm:male2} if and only if the function $w$ such that $w(t)=e^{i\frac{2}{3}\pi}z\left(t+\frac{2}{3}\pi\right)$ is also a solution. Immediatelly one gets the third solution $x(t)=e^{i\frac{4}{3}\pi}z\left(t+\frac{4}{3}\pi\right)$.

The equation \eqref{thm:male2} with $\varepsilon=0$ has three stationary points which are saddle points $z_j=e^{-i\frac{1+4j}{6}\pi}$ for $j\in\set{1,2,3}$. So it is enough to investigate what happens between solutions $z_1$ and $z_2$.

It is important to notice that the line $l: \im(z)=-\frac{1}{2}$ is invariant. There is a heteroclinic solution between $z_1$ and $z_2$ contained in this line.

Term $-\varepsilon e^{it}$ makes stationary points continue to simple periodic solutions, but only for small $0<\varepsilon$. Obviously $\varepsilon<\frac{1}{2}$. So $\varepsilon_0 < \frac{1}{2}$.

The simple periodic solution $\xi_1$ continued from $z_1$ lies at both sides of the line $l$. The same happens for the simple periodic solution $\xi_2$ continued from $z_2$. But there are such time points $t$ that the $\xi_1(t)$ is above the line $l$, while $\xi_2(t)$ is below it. So starting near $\xi_1$ and above $l$, due to $L$ big enough, the solution enters neighbourhood of $\xi_2$ being above $l$. But on the other time $t$ the point $\xi_1(t)$ is below $l$ while $\xi_2(t)$ is above it. So starting near $\xi_1$ and below $l$, due to $L$ big enough, the solution enters neighbourhood of $\xi_2$ being below $l$. This gives intersection of unstable manifold of solution $\xi_1$ with the stable one of $\xi_2$.

To obtain a semiconjudacy to a sofic shift one needs to code the behaviour of solutions. Roughly speaking, being close to $\xi_1$ for a one period or transferring from the neighbourhood of $\xi_1$ to the neighbourhood of $\xi_2$ is coded by one letter $0$. Similarly, being close to $\xi_2$ for a one period or transferring from the neighbourhood of $\xi_2$ to the neighbourhood of $\xi_3$ is coded by one letter $1$. Of course, periodic solution $\xi_j$ is coded by the sequence $(j-1)^\infty.(j-1)^\infty$ for $j\in\set{1,2,3}$. And every sequence $(j-1)^\infty.(j-1)^\infty$ for $j\in\set{1,2,3}$ codes exactly one solution (which is $\xi_j$).

It seems that it is possible to get complicated dynamics for $L=\frac{1}{2}$ but the time of transfer from neighbourhood of $\xi_1$ to neighbourhood of $\xi_2$ must be longer than one period. This gives semiconjugacy to shift with lower entropy than $\left(\hat{\Sigma}, \sigma\right)$.
\end{proof}

%\begin{pot}
\begin{proof}[Sketch of the proof of Theorem \ref{thm:glowne}]

Let us fix $R$ satisfying \eqref{ineq:R}. We define $a$, $\beta$, $\gamma$, $\delta$, $M$ to be such that
\begin{align}
\label{eq:a}
a=&0.7,\\
%\label{eq:delta}
\label{eq:beta}
\beta = & \gamma =\delta=0.01.
%\\
%\label{eq:M}
%M=& \frac{2}{R}
\end{align}
are satisfied. 
%By \eqref{ineq:R}, we get
%\begin{align}
%\label{ineq:M}
%0<M\leq &0.02
%\end{align}
By \eqref{ineq:lipschitz}, the equation \eqref{egh} generates a local process on $Q$. We denote it by $\f$ (we do need $\f$ to be a locall process on the whole $\C$ because we analyse dynamics only close to the origin). Observe that $\f$ is $2\pi$-periodic and if $f\equiv 0$, then, by Lemma \ref{lem:symmetry}, satisfies 
\begin{equation}
\label{eq:symmetryf}
-\f_{(\tau,t)}(z)=\f_{(\tau+\pi,t)}(-z) \text{ for every } \tau \in \R, z\in Q \text{ and } t\in I_{(\tau,z)}.
\end{equation}
Of course, if $f\not \equiv 0$, then \eqref{eq:symmetryf} is no longer valid, but since the perturbation $f$ is small the equality \eqref{eq:symmetryf} is still crucial for our calculations.

Write
\begin{align}
\label{eq:U}
U=&\set{(t,q)\in \R\times \C: \left|\re\left[(q-1)e^{-i\frac{t}{2}}\right]\right|\leq a, \left|\im\left[(q-1)e^{-i\frac{t}{2}}\right]\right|\leq a},\\
\label{eq:W}
W=&\set{(t,q)\in \R\times \C: \left|\re\left[(q+1)e^{-i\frac{t}{2}}\right]\right|\leq a, \left|\im\left[(q+1)e^{-i\frac{t}{2}}\right]\right|\leq a},\\
\label{eq:Z}
Z=&\{(t,q)\in \R\times \C: -0,5\leq \re[q]\leq 0.5, |\im[q]| \leq l\left((-1)^k\re[q]\right),\\
\nonumber
&t\in \left[k\pi-\beta,k\pi+\gamma \right] \text{ for some } k\in \Z\},
\end{align}
where
\begin{equation*}
l(x) = 
\begin{cases}
\frac{5}{28} - \frac{9}{14}x, &\text{ for } -0.5\leq x \leq 0.2,\\
0.05, &\text{ for } 0.2\leq x \leq 1.
\end{cases}
\end{equation*}
Moreover we write
\begin{align}
\label{eq:lambda}
\Lambda =& \{q\in \C: I_{(-\beta,q)}=\R \text{ and for every } k\in \Z \text{ there holds exactly one } \\
\nonumber
& \text{ of the conditions } \eqref{ineq:00}, \eqref{ineq:11}, \eqref{ineq:01},  \eqref{ineq:10}\}
\end{align}
where

\begin{align}
\label{ineq:00}
&\f_{(-\beta,t)}(q)\in U \text{ for every } t\in [k\pi, (k+1)\pi],\\
\label{ineq:11}
&\f_{(-\beta,t)}(q)\in W \text{ for every } t\in [k\pi, (k+1)\pi],\\
\label{ineq:01}
&\begin{cases}
\f_{(-\beta,k\pi)}(q)\in U,& \\
\f_{(-\beta,t)}(q)\in Z& \text{ for every } t\in [k\pi, k\pi+\beta+\gamma],\\
\f_{(-\beta,t)}(q)\in W& \text{ for every } t\in [k\pi+\beta+\gamma, (k+1)\pi],
\end{cases}\\
\label{ineq:10}
&\begin{cases}
\f_{(-\beta,k\pi)}(q)\in W,&\\
\f_{(-\beta,t)}(q)\in Z& \text{ for every } t\in [k\pi, k\pi+\beta+\gamma],\\
\f_{(-\beta,t)}(q)\in U& \text{ for every } t\in [k\pi+\beta+\gamma, (k+1)\pi].
\end{cases}
\end{align}

By Lemma \ref{lem:compactlambda}, the set $\Lambda$ is compact. Moreover it is invariant with respect to the Poincar\'{e} map $\f_{(-\beta,2\pi)}$, i.e. $\f_{(-\beta,2\pi)}(\Lambda)=\Lambda$.

Now we define a map $\Phi: \Lambda \ra \Sigma_2$ by
\begin{align}
\label{eq:semiconjugacy}
[\Phi(q)]_k=
\begin{cases}
0, & \text{ if } \eqref{ineq:00} \text{ or } \eqref{ineq:10} \text{ hold, }\\
1, & \text{ if } \eqref{ineq:11} \text{ or } \eqref{ineq:01} \text{ hold. }
\end{cases}
\end{align}
By the definition of $\Phi$, we get immediately
\begin{equation*}
\Phi\circ \f_{(-\beta,2\pi)}= \s^2 \circ \Phi.
\end{equation*}
Moreover, by the continuous dependence of solutions of \eqref{egh} on initial conditions, $\Phi$ is continuous.

Let $x\in \Sigma \subset \Sigma_2$ be such that there exists $N\in \N$ such that $x_j=0$ for every $|j|\geq N$, i.e. $x$ is homoclinic to $0^\infty$. By Lemma \ref{lem:homoorbita}, there exists $q_x\in \Lambda$ such that $\Phi(q_x)=x$. Since the set of homoclinic points to $0^\infty$ is dense in $\Sigma$, $\Phi$ is continuous and defined on a compact set $\Lambda$, we get
\begin{equation*}
\Phi(\Lambda)=\Sigma,
\end{equation*}
i.e. surjectivity of $\Phi$. Finally, $\Phi$ is a semiconjugacy between $\Lambda$ and $\Sigma$.

By Lemma \ref{lem:jedendojeden},
\begin{equation}
\label{eq:moc}
\# \Phi^{-1}(\{0^\infty\})=1
\end{equation}
holds so the existence of uniform distributional chaos follows by Theorem \ref{DC:thm2}.

By the symmetry \eqref{eq:symmetryf}, we get $\# \Phi^{-1}(\{1^\infty\})=1$. The existence of infinitely many heteroclinic solutions between $-1$ and $1$ follows by Lemma \ref{lem:hetero}. Namely, for every $x\in \Sigma$ such that there exists $N\in \N$ such that $x_j=0$, $x_{-j}=1$ or $x_j=1$, $x_{-j}=0$ for $j>N$ every point from $\Phi^{-1}(\{x\})$ is heteroclinic from $1$ to $-1$ or from $-1$ to $1$ respectively. 
%$\square$
%\end{pot}
\end{proof}

Now we present lemmas which were used in the proof of Theorem \ref{thm:glowne}. We use the notation introduced in the proof of Theorem \ref{thm:glowne} unless stated otherwise.

\begin{lem}
\label{lem:symmetry}
If $f\equiv 0$ then the condition \eqref{eq:symmetryf} holds.
\end{lem}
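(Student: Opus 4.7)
My plan is to verify the symmetry by the standard uniqueness-of-solutions argument: take an arbitrary solution, apply the candidate symmetry to produce a new curve, check that the new curve satisfies the same ODE with the correctly transformed initial condition, and then invoke uniqueness.

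Concretely, fix $\tau\in\R$ and $z\in Q$, and let $u(s)=\f_{(\tau,s-\tau)}(z)$, so that $u$ is the solution of $\dot{z}=Re^{is}(\bar{z}^2-1)$ with $u(\tau)=z$, defined on the maximal interval of existence. Define
\begin{equation*}
w(s)=-u(s-\pi).
\end{equation*}
Then $w(\tau+\pi)=-u(\tau)=-z$, so $w$ has the correct initial data at time $\tau+\pi$ to equal $\f_{(\tau+\pi,\,\cdot-\tau-\pi)}(-z)$. It remains to check that $w$ satisfies the ODE. Differentiating and using $e^{i(s-\pi)}=-e^{is}$ together with $\overline{w(s)}^{\,2}=\bigl(-\overline{u(s-\pi)}\bigr)^{2}=\overline{u(s-\pi)}^{\,2}$, we compute
\begin{equation*}
\dot{w}(s)=-\dot{u}(s-\pi)=-Re^{i(s-\pi)}\bigl(\overline{u(s-\pi)}^{\,2}-1\bigr)=Re^{is}\bigl(\overline{w(s)}^{\,2}-1\bigr),
\end{equation*}
which is precisely the equation \eqref{egh} with $f\equiv 0$.

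By the uniqueness of solutions to the Cauchy problem, $w(s)=\f_{(\tau+\pi,\,s-\tau-\pi)}(-z)$ on the common interval of existence. Setting $s=\tau+\pi+t$ yields $-\f_{(\tau,t)}(z)=\f_{(\tau+\pi,t)}(-z)$ for every $t\in I_{(\tau,z)}$, which is \eqref{eq:symmetryf}. There is no real obstacle here: the lemma is a direct verification of a discrete symmetry $(t,z)\mapsto(t+\pi,-z)$ of the unperturbed vector field, exploiting that $e^{it}$ changes sign under $t\mapsto t+\pi$ while the nonlinearity $\bar{z}^{2}-1$ is invariant under $z\mapsto -z$.
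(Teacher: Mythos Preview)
Your proof is correct and follows essentially the same approach as the paper: both verify directly that if $u$ solves the unperturbed equation then the time-shifted, sign-flipped curve also solves it, using $e^{i(s-\pi)}=-e^{is}$ and the evenness of $\bar z^2-1$. You are slightly more explicit than the paper in tracking the initial condition and invoking uniqueness, but the argument is the same.
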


\begin{proof}
Let $\Delta\subset \R$ be an interval and $z:\Delta\ra \C$ be a solution of \eqref{egh}. It is enough to show that $\xi$ given by $\xi(t)=-z(t+\pi)$ is also a solution of \eqref{egh}. To see this let us calculate
\begin{align*}
\dot{\xi}(t)=-\dot{z}(t+\pi)=-Re^{i(t+\pi)}\left[\z^2(t+\pi)-1\right] = Re^{it}\left[\overline{\xi}^2(t)-1\right].
\end{align*}
%$\square$
\end{proof}

\begin{lem}
\label{lem:compactlambda}
The set $\Lambda$ given by \eqref{eq:lambda} is compact.
\end{lem}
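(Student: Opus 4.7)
The plan is to show that $\Lambda$ is closed and bounded in $\C$. Boundedness is immediate from the conditions at $k=0$, while the essential work lies in verifying closedness.

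For boundedness, any $q\in\Lambda$ must, at $k=0$, satisfy one of \eqref{ineq:00}, \eqref{ineq:11}, \eqref{ineq:01}, \eqref{ineq:10}; evaluating the trajectory at $t=0$ (where $\f_{(-\beta,0)}(q)=q$) forces $q\in U_0\cup W_0$, a bounded subset of $\C$. In fact, the whole trajectory $\f_{(-\beta,\cdot)}(q)$ is trapped in a fixed compact subset of $Q$, uniformly in $t\in\R$, because each slice $U_t$, $W_t$, $Z_t$ is bounded uniformly in $t$.

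The first key observation for closedness is that the four alternatives in the definition of $\Lambda$ are \emph{pairwise disjoint} for each $k$, so ``exactly one'' can be replaced by ``at least one''. Indeed, $U_t$ and $W_t$ are closed squares of half-diagonal $a\sqrt{2}<1$ centered at the points $1$ and $-1$ (after rotation of the square by the angle $t/2$), which are distance $2$ apart, so $U_t\cap W_t=\emptyset$ for every $t$. Comparing the value of the trajectory at the endpoints $t=k\pi$ and $t=(k+1)\pi$ then rules out each of the six pairs. Letting $A_k^{ij}$ denote the set of $q$ whose trajectory is defined on $[k\pi,(k+1)\pi]$ and satisfies the corresponding condition, and $A_k=A_k^{00}\cup A_k^{11}\cup A_k^{01}\cup A_k^{10}$, one has $\Lambda=\bigcap_{k\in\Z}A_k$.

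The main remaining step is to verify that for a sequence $q_n\in\Lambda$ with $q_n\to q^*$ one has $q^*\in\Lambda$. Since each $\f_{(-\beta,\cdot)}(q_n)$ stays in a common compact subset of $Q$ for all $t\in\R$, continuous dependence on initial conditions together with a standard uniform extension argument yields $I_{(-\beta,q^*)}=\R$ and $\f_{(-\beta,\cdot)}(q_n)\to \f_{(-\beta,\cdot)}(q^*)$ uniformly on compact $t$-intervals. This global extension of the limiting trajectory is the main obstacle. Once it is established, the closedness of $U$, $W$, $Z$ (all defined by non-strict inequalities) implies each condition passes to the limit, so $q^*\in A_k$ for every $k$, and hence $q^*\in\Lambda$.
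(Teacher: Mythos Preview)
Your proof is correct and follows the same strategy as the paper: establish closedness (boundedness being clear) by using the uniform confinement of all $\Lambda$-trajectories to a fixed compact subset of $Q$ to extend the limiting trajectory to all of $\R$, and then invoking the closedness of $U$, $W$, $Z$ so that each alternative passes to limits. The only minor variation is that you first check the four alternatives are pairwise disjoint (via $U_t\cap W_t=\emptyset$ and endpoint comparisons) to replace ``exactly one'' by ``at least one'', whereas the paper instead argues that along a convergent sequence in $\Lambda$ almost all terms must satisfy the same alternative for a given $k$ and passes that single alternative to the limit; the two arguments are equivalent in spirit, yours being slightly more explicit.
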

\begin{proof}
It is enough to prove that $\Lambda$ is a closed subset of $\C$. Let $\set{z_n}_{n\in \N}\subset \Lambda$ be such that $\lim_{n \to \infty} z_n$ exists. We denote this limit by $z$.

Let us fix $t\in \R$. Then $\f_{(-\beta,t)}(z_n)\in \overline{B}(0,2)$ holds for every $n\in \N$, so $\lim_{n\to \infty} \f_{(-\beta,t)}(z_n)$ exists and it must be equal to $\f_{(-\beta,t)}(z)$, so $\f_{(-\beta,t)}(z)$ exists. By the arbitrariness of the choice of $t$, we get $I_{(-\beta,z)}=\R$.

Now we fix $k\in \Z$.

For every $z\in \Lambda$ let us write $\eta_z:\R\ra \C$ where $\eta_z(t)=\f(-\beta,z,t)$. By the continuous dependence on initial conditions,
\begin{equation*}
\lim_{n\to \infty} \eta_{z_n} = \eta_z \text{ uniformly on the interval } [k\pi, (k+1)\pi].
\end{equation*}
Thus exactly one condition among \eqref{ineq:00}, \eqref{ineq:11}, \eqref{ineq:01}, \eqref{ineq:10} is satisfied by almost all $\eta_{z_n}$. Since for every $t\in \R$ sets $U_t$, $W_t$ and $Z_t$ are compact (or empty), $\eta_{z}$ satisfies this condition.

Finally, $z\in \Lambda$.
%$\square$
\end{proof}

\begin{lem}
\label{lem:homoorbita}
Let $x\in \Sigma$ be such that there exists $N\in \N$ such that $x_j=0$ for every $|j|\geq N$. Then there exists $q_x\in \Lambda$ such that $\Phi(q_x)=x$.
\end{lem}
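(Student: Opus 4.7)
The plan is to construct $q_x$ as a point in the intersection of a nested family of compact sets encoding longer and longer finite truncations of the itinerary $x$. Since $x$ is eventually $0^\infty$ on both sides, a point tracking $x$ over all of $\Z$ is the same as a point tracking $x_k$ for every $k\in\set{-M,\dots,M-1}$ once $M\geq N$, the additional symbols outside this window being $0$ and thus prescribing condition \eqref{ineq:00}. For each such $M$, let $K_M\subset Q$ denote the set of points $q$ for which $[-M\pi-\beta,M\pi-\beta]\subset I_{(-\beta,q)}$ and the orbit $t\mapsto\f_{(-\beta,t)}(q)$ satisfies on $[k\pi,(k+1)\pi]$ the particular condition among \eqref{ineq:00}--\eqref{ineq:10} selected by the adjacent symbols $x_{k-1},x_k$. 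By the argument of Lemma \ref{lem:compactlambda} each $K_M$ is compact, and plainly $K_{M+1}\subset K_M$; once every $K_M$ is shown to be nonempty, nested compactness supplies some $q_x\in\bigcap_{M\geq N}K_M$, which by construction lies in $\Lambda$ with $\Phi(q_x)=x$.

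The heart of the proof is therefore the nonemptiness of $K_M$, which I would address by a Wa\. zewski-type argument applied to the local flow $\phi$ on $\R\times Q$ generated by $\f$ via \eqref{eq:processflow}. One builds a Wa\. zewski block $\widetilde W=\widetilde W(x,M)\subset\R\times Q$ whose $t$-slice equals $U_t$, $W_t$ or $Z_t$ in accordance with the symbols $x_{-M},\dots,x_{M-1}$, the transitions being glued at the times $k\pi-\beta$, $k\pi+\gamma$ and $k\pi+\beta+\gamma$ prescribed by \eqref{eq:U}--\eqref{eq:Z}. The hyperbolic structure of the periodic solutions $\pm1$ of the unperturbed equation, preserved under the smallness condition $R\geq 100N$, should identify the exit set $\widetilde W^-$ as the union of the two faces of the $U$- and $W$-slabs transverse to the ``expanding'' direction rotating with $e^{it/2}$. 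I then select a continuous arc $Y$ in the initial slice $\{-M\pi\}\times U_{-M\pi}$ joining the two components of the local exit set through $U_{-M\pi}$, and verify that $\widetilde W^-$ is not a strong deformation retract of $Y\cup\widetilde W^-$ in $\widetilde W$. Theorem \ref{t-wazewskiego} then yields a point in $Y$ whose forward $\phi$-trajectory stays in $\widetilde W$ throughout, and whose $Q$-coordinate belongs to $K_M$.

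The main obstacle is the verification that $\widetilde W$ really is a Wa\. zewski set in the sense of Proposition \ref{prop:wazewski}, especially at the gluing times where the slab type changes: one must show, using the vector-field bounds \eqref{ineq:N}--\eqref{ineq:lipschitz} together with the explicit unperturbed symmetry \eqref{eq:symmetryf}, that the boundary of each slab is transverse to $v$ in the correct direction (inward on entrance faces, outward on exit faces). The numerical choices in \eqref{eq:a} and \eqref{eq:beta} presumably serve exactly to make these transversality estimates quantitative. Once transversality is in place, the non-retractability of $\widetilde W^-$ inside $Y\cup\widetilde W^-$ is a standard connectedness argument exploiting that the stable/unstable box structure at $\pm1$ is topologically a cross whose exit set is split into two components by any transversal arc.
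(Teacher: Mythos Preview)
Your nested-compactness reduction is reasonable, but the appeal to Theorem~\ref{t-wazewskiego} on the block $\widetilde W(x,M)$ does not deliver what you need. If $\widetilde W$ lives over a bounded time interval, then every $\phi$-trajectory leaves $\widetilde W$ (through the cap at the final time if not earlier), so $\widetilde W^*=\widetilde W$, the escape-time map of Lemma~\ref{lem:wazewski} is everywhere defined and continuous, and $\widetilde W^-$ is automatically a strong deformation retract of $Y\cup\widetilde W^-$. The hypothesis of Theorem~\ref{t-wazewskiego} is thus never violated and the conclusion $\phi^+(x_0)\subset\widetilde W$ is impossible. What you actually want is the finite-time statement that some point of $Y$ reaches the terminal cap without first hitting a lateral exit face; that is not a Wa\. zewski conclusion but a connectedness/crossing argument, and it is moreover delicate at the gluing times because the ``expanding'' direction switches from horizontal in $w(U)$ to vertical in $p(W)$ while the orbit passes through $Z$.

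The paper organises the proof differently and supplies precisely this missing ingredient. It uses the homoclinic hypothesis head-on: a genuine infinite-time Wa\. zewski argument (Lemma~\ref{lem:niestabilna}) in the rotating coordinates $w=(q-1)e^{-it/2}$ produces the local unstable curve $o\mapsto o+i\xi(o)$ at time $\delta=-\beta-2(N-1)\pi$, encoding the entire past tail of $0$'s at once. That arc is then pushed through the finitely many prescribed symbols not by Wa\. zewski but by the rectangle-crossing Lemma~\ref{lem:pelnyopisU}, iterated in Lemma~\ref{lem:przeskoki} together with a detailed analysis of the transit through $Z$; the output is a subarc still spanning the small box $K$ from $K_1$ to $K_2$ at time $\delta+4N\pi$. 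A second application of Lemma~\ref{lem:niestabilna} to the time-reversed equation yields the local stable curve there, and an intermediate-value argument intersects the two curves to produce $q_x$ directly, with no nested compactness. If you prefer to keep your scheme, the Wa\. zewski step must be replaced by this crossing mechanism; once Lemma~\ref{lem:pelnyopisU} and the $Z$-transit estimates are in hand, your sets $K_M$ are nonempty and the rest goes through.
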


\begin{proof} First of all we assume that $f\equiv 0$.

In the sequel we investigate \eqref{egh} (especially in a neighbourhood of $1$) in the coordinates
\begin{align}
\label{eq:zmianaB}
w=w(q,t)=(q-1)e^{-i\frac{t}{2}}
\end{align}
which has the form
\begin{equation}
\label{eq:B}
\dot{w}=u(t,w)=2R\overline{w}+ Re^{-i\frac{t}{2}}\overline{w}^2 -\frac{i}{2}w + e^{-i\frac{t}{2}} f\left( t, we^{i\frac{t}{2}}+1 \right).
\end{equation}
We also investigate \eqref{egh} (especially in a neighbourhood of $-1$) in the coordinates
\begin{equation}
\label{eq:zmianaC}
p=p(q,t)=(q+1)e^{-i\frac{t}{2}}
\end{equation}
which has the form
\begin{equation}
\label{eq:C}
\dot{p}=\tilde{u}(t,p)=-2R\overline{p}+ Re^{-i\frac{t}{2}}\overline{p}^2 -\frac{i}{2}p + e^{-i\frac{t}{2}} f\left( t, pe^{i\frac{t}{2}}-1 \right).
\end{equation}
We denote by $\psi$ and $\tilde{\psi}$ the locall processes generated by \eqref{eq:B} and \eqref{eq:C} respectively. Let us stress that
\begin{align*}
%\label{eq:wodwrotna}
q=&\Upsilon(w,t)=e^{i\frac{t}{2}}w+1,\\
%\label{eq:podwrotna}
q=& \Xi(p,t)= e^{i\frac{t}{2}}p-1
\end{align*}
hold. Thus the following equalities
\begin{align*}
\f(\tau,q,t)=&\Upsilon(\psi(\tau,w(q,\tau),t),\tau +t),\\
\psi(\tau,w,t)=& w(\f(\tau,\Upsilon(w,\tau),t),\tau+t),\\
\f(\tau,q,t)=&\Xi(\tilde{\psi}(\tau,p(q,\tau),t),\tau +t),\\
\tilde{\psi}(\tau,p,t)=& p(\f(\tau,\Xi(p,\tau),t),\tau+t)
\end{align*}
are satisfied wherever they have sense.

It is easy to see, that
\begin{align*}
w(U)=&\set{w\in  \C: |\re[w]|\leq a, |\im[w]|\leq a},\\
p(W)=&\set{p\in  \C: |\re[p]|\leq a, |\im[p]|\leq a}.
\end{align*}
Let us notice that inside $w(U)$ the vector field $u$ is close to $2R\overline{w}$. The other terms are treated as perturbation so the qualitative behaviour of $u$ inside $\R\times w(U)$ is just as the term $2R\overline{w}$.
Similarly, the qualitative behaviour of $\tilde{u}$ inside $\R \times p(W)$ is just as the term $-2R\overline{p}$.

Let
\begin{align}
\label{eq:defK}
K=&\set{w\in \C: |\re[w]|\leq \frac{11}{10}\beta, |\im[w]|\leq 2\beta^2},\\
\nonumber
L=&\set{p\in \C: |\re[p]|\leq 2\beta^2, |\im[p]|\leq \frac{11}{10}\beta}.
\end{align}
By Lemma \ref{lem:niestabilna}, there exists a continuous function
\begin{equation*}
\xi: \left[-\frac{11}{10}\beta,\frac{11}{10}\beta\right]\ni o \mapsto \xi(o)\in \left[-2\beta^2,2\beta^2\right]
\end{equation*}
such that for every $o\in \left[-\frac{11}{10}\beta,\frac{11}{10}\beta\right]$ we get
\begin{align*}
\lim_{t\to -\infty} \psi(\delta,o+i\xi(o),t)=&0,\\
\psi(\delta,o+i\xi(o),t)\in &K \text{ for every } t\leq 0
\end{align*}
where $\delta=-\beta-2(N-1)\pi$.

It immediatelly follows by Lemma \ref{lem:przeskoki}, that there exists an interval $[\mu,\nu]\subset \left[-\frac{11}{10}\beta,\frac{11}{10}\beta\right]$ such that the following conditions hold: \eqref{ineq:przedzialmunu}, exactly one out of \eqref{eq:bezobrotu} and \eqref{eq:zobrotem}, for every $l\in \set{1,2,\dots, 2N}$ exactly one out of \eqref{ineq:pom00}, \eqref{ineq:pom11}, \eqref{ineq:pom01} and \eqref{ineq:pom10} where

\begin{align}
\label{ineq:przedzialmunu}
&\psi(\delta,o+i\xi(o),4N\pi) \in K \text{ for every } o \in [\mu, \nu],\\
\label{eq:bezobrotu}
&
\re[\psi(\delta,\mu+i\xi(\mu),4N\pi)]=-\frac{11}{10}\beta, \; \re[\psi(\delta,\nu+i\xi(\nu),4N\pi)]=\frac{11}{10}\beta,
\\
\label{eq:zobrotem}
&\re[\psi(\delta,\mu+i\xi(\mu),4N\pi)]=\frac{11}{10}\beta, \;
\re[\psi(\delta,\nu+i\xi(\nu),4N\pi)]=-\frac{11}{10}\beta,
\\
\label{ineq:pom00}
&
\begin{cases}
\text{ if } x_{-N+l-1}=x_{-N+l}=0, \text{ then for every } o \in [\mu, \nu]\\
\psi(\delta,o+i\xi(o),t) \in K \text{ for every } t\in [(l-1)\pi, l\pi],
\end{cases}\\
\label{ineq:pom11}
&
\begin{cases}
\text{ if } x_{-N+l-1}=x_{-N+l}=1, \text{ then for every } o \in [\mu, \nu]\\
\tilde{\psi}(\delta,p(\Upsilon(o+i\xi(o),\delta),\delta),t) \in L \text{ for every } t\in [(l-1)\pi, l\pi],
\end{cases}
\\
\label{ineq:pom01}
&
\begin{cases}
\text{ if } x_{-N+l-1}=0, x_{-N+l}=1, \text{ then for every } o \in [\mu, \nu]\\
\psi(\delta,o+i\xi(o),(l-1)\pi) \in K,\\
\f(\delta,\Upsilon(o+i\xi(o),\delta),t)\in Z \text{ for every } t\in [(l-1)\pi, (l-1)\pi+\beta+\gamma],\\
\tilde{\psi}(\delta,p(\Upsilon(o+i\xi(o),\delta),\delta),t) \in p(W) \text{ for every } t\in [(l-1)\pi+\beta+\gamma, l\pi],\\
\tilde{\psi}(\delta,p(\Upsilon(o+i\xi(o),\delta),\delta),l\pi) \in L,
\end{cases}
\\
\label{ineq:pom10}
&
\begin{cases}
 \text{ if } x_{-N+l-1}=1, x_{-N+l}=0, \text{ then for every } o \in [\mu, \nu]\\
\tilde{\psi}(\delta,p(\Upsilon(o+i\xi(o),\delta),\delta),(l-1)\pi) \in L, \\
\f(\delta,\Upsilon(o+i\xi(o),\delta),t)\in Z \text{ for every } t\in [(l-1)\pi, (l-1)\pi+\beta+\gamma],\\
 \psi(\delta,o+i\xi(o),t) \in w(U)  \text{ for every } t\in [(l-1)\pi+\beta+\gamma, l\pi],\\
\psi(\delta,o+i\xi(o),l\pi) \in K.
\end{cases}
\end{align}

Reversing time in \eqref{eq:B} and applying Lemma \ref{lem:niestabilna}, we get the existence of a continuous function
\begin{equation*}
\tilde{\xi}: \left[-\frac{11}{10}\beta,\frac{11}{10}\beta\right]\ni o \mapsto \tilde{\xi}(o)\in [-2\beta^2,2\beta^2]
\end{equation*}
such that for every $o\in \left[-\frac{11}{10}\beta,\frac{11}{10}\beta\right]$ we get
\begin{align*}
\lim_{t\to +\infty} \psi(-\beta + 2(N+1)\pi,\tilde{\xi}(o)+io,t)=&0,\\
\psi(-\beta + 2(N+1)\pi,\tilde{\xi}(o)+io,t)\in &w(U) \text{ for every } t\geq 0.
\end{align*}

By the continuity of $\psi$ and \eqref{ineq:przedzialmunu}, \eqref{eq:bezobrotu}, \eqref{eq:zobrotem}, there exist $\widehat{o}\in [\mu,\nu]$, $\widehat{y}\in \left[-\frac{11}{10}\beta,\frac{11}{10}\beta\right]$ such that
\begin{equation*}
\psi(\delta,\widehat{o}+i\xi(\widehat{o}),4N\pi)=\tilde{\xi}(\widehat{y})+i\widehat{y}
\end{equation*}
holds.

Write $q_x= \Upsilon(\psi(\delta,\widehat{o}+i\xi(\widehat{o}),2(N-1)\pi),-\beta)$. It is easy to see that $\Phi(q_x)=x$ holds.

The proof for the case $f\not \equiv 0$ is similar. The role of trivial solutions $-1$ and $1$ plays periodic ones which are contuined from them.
%$\square$
\end{proof}

In the following lemma we use the notation introduced in the proofs of Theorem \ref{thm:glowne} and Lemma \ref{lem:homoorbita}.

\begin{lem}
\label{lem:niestabilna}
For every $\tau\in \R$ there exists a continuous function
\begin{equation*}
\xi: \left[-\frac{11}{10}\beta,\frac{11}{10}\beta\right]\ni o \mapsto \xi(o)\in [-2\beta^2,2\beta^2]
\end{equation*}
such that for every $o\in \left[-\frac{11}{10}\beta,\frac{11}{10}\beta\right]$ we get
\begin{align*}
\lim_{t\to -\infty} \psi(\tau,o+i\xi(o),t)=&0,\\
\psi(\tau,o+i\xi(o),t)\in &K \text{ for every } t\leq 0.
\end{align*}
\end{lem}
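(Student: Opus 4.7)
The plan is to construct $\xi$ as the graph of the local unstable manifold of the solution $w\equiv 0$ of \eqref{eq:B} (for $f\equiv 0$) or of the nearby $4\pi$-periodic solution (in general) via Wa\. zewski's method, with $K$ serving as an isolating block. In real coordinates $w=x+iy$, the linearization of \eqref{eq:B} at $w=0$ is $\dot x = 2Rx + y/2$, $\dot y = -2Ry - x/2$, a hyperbolic saddle of rate $\approx 2R$ with unstable direction close to the real axis and stable direction close to the imaginary one.

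The first step is to verify that $K$ is an isolating block: on the vertical faces $\set{\re w = \pm\tfrac{11}{10}\beta,\ |\im w|\le 2\beta^2}$ one needs $\sgn(\re u)=\sgn(\re w)$ with $|\re u|\gtrsim R\beta$, and on the horizontal faces $\set{|\re w|\le\tfrac{11}{10}\beta,\ \im w = \pm 2\beta^2}$ one needs $\sgn(\im u)=-\sgn(\im w)$ with $|\im u|\gtrsim R\beta^2$. The leading terms $\pm 2R\,\re w$ and $\mp 2R\,\im w$ must beat the quadratic remainder $|Re^{-it/2}\bar w^2|\le R|w|^2=O(R\beta^2)$, the rotation $|-iw/2|=O(\beta)$, and the $f$-contribution; for this last term, after centering coordinates on the periodic solution close to $w=0$ (so the new origin is a genuine solution), the Lipschitz bound \eqref{ineq:lipschitz} replaces $|f|\le N$ by a bound of order $N|w|=O(N\beta)=O(R\beta^2)$ (using $R\ge 100N$ and $\beta=0.01$), which is then dominated on $\partial K$ by the linear part.

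With the isolating-block property, fix $\tau\in\R$. For each $o\in[-\tfrac{11}{10}\beta,\tfrac{11}{10}\beta]$ consider the segment $V_o=\set{o+iy:y\in[-2\beta^2,2\beta^2]}$. The backward-exit set of $K$ splits into two disjoint pieces, the top $T=\set{\im w=2\beta^2}$ and the bottom $B=\set{\im w=-2\beta^2}$. Let $A_o,B_o\subset V_o$ be the subsets whose backward trajectory first leaves $K$ through $T$ or $B$ respectively; by Lemma \ref{lem:wazewski} and continuous dependence on initial conditions both are open in $V_o$, disjoint, and contain the endpoints $y=\pm 2\beta^2$. Hence $V_o\setminus(A_o\cup B_o)\ne\emptyset$, and $\xi(o):=\sup\set{y:o+iy\in B_o}$ gives a point $o+i\xi(o)\notin A_o\cup B_o$ whose backward trajectory stays in $K$ for all $t\le 0$. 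Continuity of $\xi$ follows from the continuous dependence of $A_o,B_o$ on $o$ together with the disjointness of $T$ and $B$. The decay $\psi(\tau,o+i\xi(o),t)\to 0$ as $t\to-\infty$ then follows by iterating the construction on shrunken blocks $K_\e=\set{|\re w|\le\tfrac{11}{10}\e,\ |\im w|\le 2\e^2}$ for $\e\in(0,\beta]$: the isolating-block estimate forces $|\re w|$ to shrink exponentially in backward time, after which the stable-direction dynamics force $|\im w|$ to decay as well.

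The principal obstacle I anticipate is the isolating-block verification on the thin horizontal faces of $K$. The raw bound $|f|\le N$ allows $N$ up to $R/100$, which is \emph{larger} than $4R\beta^2$ when $\beta=0.01$, so $K$ is \emph{not} isolating if one uses only \eqref{ineq:N}. The Lipschitz bound \eqref{ineq:lipschitz} together with the coordinate shift to the periodic solution near $w=0$ is essential: it converts the additive $O(N)$ bound on $f$ into a multiplicative $O(N|w|)$ bound that is properly dominated, though only by a bounded margin, so the arithmetic must be tracked carefully to confirm the isolating-block inequalities for the prescribed constants.
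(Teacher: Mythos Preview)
Your Wa\.zewski-block strategy is close in spirit to the paper's, but the paper does \emph{not} use the rectangle $K$ as its Wa\.zewski set. After reversing time it works instead in the angular sector
\[
\Gamma=\Bigl\{(t,a):\ \re a\in\bigl(0,\tfrac{11}{10}\beta\bigr],\ |\Arg a|\le\beta\Bigr\}
\]
(and its reflection $-\Gamma$ for $o<0$), whose exit set is the disconnected pair of rays $|\Arg a|=\beta$. This choice is exactly what makes the convergence $\psi\to 0$ clean: inside $\Gamma$ one has $|\im a|\le\tan(\beta)\,\re a$, and a direct estimate gives $\re[\hat u]<0$ throughout, so $\re a$ decreases monotonically to $0$ and $\im a$ is dragged along with it. The bound $|\xi(o)|\le\tfrac{11}{10}\beta\tan\beta<2\beta^2$ also falls out for free.

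Your convergence step has a genuine gap. The ``shrunken blocks'' $K_\e=\{|\re w|\le\tfrac{11}{10}\e,\ |\im w|\le 2\e^2\}$ are \emph{not} isolating for small $\e$: on the horizontal face $\im w=2\e^2$ you need $4R\e^2$ to dominate both the quadratic term $R|w|^2\approx 1.21R\e^2$ and the rotation contribution $\tfrac12|\re w|\le\tfrac{11}{20}\e$ coming from $-\tfrac{i}{2}w$. The latter is of order $\e$, not $\e^2$, and beats $2.79R\e^2$ as soon as $\e\lesssim 1/R$, so your iteration stalls well before reaching the origin. The sector $\Gamma$ avoids this precisely because its aspect ratio is fixed at every scale. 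Your continuity sketch is also looser than the paper's: rather than appealing to ``continuous dependence of $A_o,B_o$ on $o$'', the paper argues by contradiction, recenters on the solution $\chi$ through a putative discontinuity point $o_1+i\xi(o_1)$, and uses an auxiliary cone $\widetilde K=\{|\im\zeta|\ge|\re\zeta|\}$ in the difference variable $\zeta=a-\chi$ to show that any distinct nearby solution would be expelled from $K$ in backward time, contradicting the defining property of $\xi$. Finally, the lemma is stated and invoked (in Lemma~\ref{lem:homoorbita}) first under $f\equiv 0$; the Lipschitz obstacle you highlight, while a legitimate issue for the perturbed problem, is not the crux of this particular proof.
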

\begin{proof}
To use a Wa\. zewski method, let us reverse the time in \eqref{eq:B} by setting $a(t)=w(-t)$. We get
\begin{equation}
\label{eq:niestabilna1}
\dot{a}=\widehat{u}(t,a)=-2R\overline{a} - Re^{i\frac{t}{2}}\overline{a}^2 + \frac{1}{2}a.
\end{equation}
Let $\widehat{\psi}$ and $\widehat{\phi}$ denote the local process on $\C\setminus \set{0}$ and the local flow on $\R\times (\C \setminus \set{0})$, respectively, generated by \eqref{eq:niestabilna1}. Of course, the relation between $\widehat{\psi}$ and $\widehat{\phi}$ is given by \eqref{eq:processflow}.

Let us fix $\tau\in \R$. To finish the proof it is enough to show that there exists a continuous function
\begin{equation*}
\xi: \left[-\frac{11}{10}\beta,\frac{11}{10}\beta\right]\ni o \mapsto \xi(o)\in [-2\beta^2,2\beta^2]
\end{equation*}
such that $\xi(0)=0$ and for every $o\in \left[-\frac{11}{10}\beta,\frac{11}{10}\beta\right]\setminus \set{0}$ we get
\begin{align}
\label{eq:niestabilna2}
\lim_{t\to +\infty} \widehat{\psi}(\tau,o+i\xi(o),t)=&0,\\
\label{ineq:niestabilna1}
\widehat{\psi}(\tau,o+i\xi(o),t)\in &K \text{ for every } t\geq 0.
\end{align}

Let us fix $o\in \left(0,\frac{11}{10}\beta\right]$. We define
\begin{align*}
\Gamma = \set{(t,a)\in \R\times \C: t\in \R, \re[a]\in \left(0,\frac{11}{10}\beta\right], |\Arg[a]| \leq \beta}.
\end{align*}
We show that
\begin{align}
\label{eq:Gamma-}
\Gamma^- =  \set{(t,a)\in \R\times \C: t\in \R, \re[a]\in \left(0,\frac{11}{10}\beta\right], |\Arg[a]| = \beta}.
\end{align}
We parameterize part of $\partial \Gamma$ by
\begin{align*}
s_1:\R\times\left(0, \frac{11\beta}{10\cos(\beta)}  \right)\ni (t,\theta) \mapsto (t, \theta e^{i\beta}).
\end{align*}
An outward orthonormal vector to this part of $\partial \Gamma$ is given by
\begin{align*}
n_1:\R\times\left(0, \frac{11\beta}{10\cos(\beta)}  \right)\ni (t,\theta) \mapsto \left(0, i e^{i\beta}\right).
\end{align*}
The inner product of the outward orhonormal vector and the vector field $\widehat{u}$ has the form
\begin{align*}
\langle n_1(t,\theta), \widehat{u}(s_1(t,\theta))\rangle =& \re\Big[-ie^{-i\beta}(-2)R\theta e^{-i\beta} -ie^{-i\beta}(-R) e^{i\frac{t}{2}} \theta^2 e^{-2i\beta} \\
& -ie^{-i\beta} \theta \frac{1}{2} e^{i\beta} \Big]\\
\geq & 2R \sin(2\beta) - R \theta^2 -\frac{1}{2}\theta\\
>& 0.
\end{align*}
Another part of $\partial \Gamma$ can be parameterized by
\begin{align*}
s_2:\R\times\left(0, \frac{11\beta}{10\cos(\beta)}  \right)\ni (t,\theta) \mapsto (t, \theta e^{-i\beta}).
\end{align*}
An outward orthonormal vector to this part of $\partial \Gamma$ is given by
\begin{align*}
n_2:\R\times\left(0, \frac{11\beta}{10\cos(\beta)}  \right)\ni (t,\theta) \mapsto \left(0, -i e^{-i\beta}\right).
\end{align*}
By calculations similar to the above, one can see that $\langle n_1(t,\theta), \widehat{u}(s_1(t,\theta))\rangle >0$ holds for every $(t,\theta) \in \left(0, \frac{11\beta}{10\cos(\beta)}  \right)$.

Now let $(t,a)\in \Gamma$ i.e. $t\in \R$, $\re[a]\in \left(0, \frac{11}{10}\beta\right]$ and \begin{equation}
\label{ineq:zbiegado0}
|\im[a]|\leq \tan({\beta})\re[a]
\end{equation}
hold. We calculate
\begin{align}
\nonumber
\re[\widehat{u}(t,a)]=& \re\left[-2R\overline{a} - Re^{i\frac{t}{2}}\overline{a}^2 + \frac{1}{2}a \right]\\
\label{ineq:Gammalewo}
\leq & -2R \re[a] +R|a|^2 +\frac{1}{2}|a|\\
\nonumber
\leq & \re[a] \left(-2R + R \re[a] (1+\tan^2(\beta))+ \frac{1}{2}\sqrt{1+\tan^2(\beta)}\right)\\
\nonumber
<& 0.
\end{align}
Finally, the vector field $\widehat{u}$ points outwards $\Gamma$ on some part of $\partial \Gamma$ and points inwards on the other part of $\partial \Gamma$. Thus \eqref{eq:Gamma-} holds.

Let us notice, that since both $\Gamma$ and $\Gamma^-$ are closed in $\R\times (\C\setminus\set{0})$, by Proposition \ref{prop:wazewski}, $\Gamma$ is a Wa\. zewski set.

Write
\begin{equation*}
\Theta =\set{(t,a)\in \Gamma: t= \tau, \re[a]=o}.
\end{equation*}
Since $\Gamma^-$, as a not connected set, is not a strong deformation retract of a connected $\Gamma^-\cup \Theta$ in $\Gamma$, by Theorem \ref{t-wazewskiego}, there exists $a_0\in \Theta_\tau$ such that
\begin{equation}
\label{ineq:Gammaphiplus}
{\widehat{\phi}}^+(\tau,a_0)\subset \Gamma
\end{equation}
holds.

We set $\xi(o)=\im[a_0]$ (if there are more than one $a_0$'s, we choose one of them). To define $\xi$ for negative $o$'s we repeat the above construction with $\widehat{\Gamma}$ instead of $\Gamma$ where $\widehat{\Gamma} = \set{(t,a)\in \R\times \C: (t,-a) \in \Gamma }$. The same ralation holds between  $\widehat{\Gamma}^-$ and $\Gamma^-$. All calculations are similar to the above due to the symmetries of $-2R\overline{a}$ which is the leading term of $\widehat{u}$ close to the origin.

Let us notice, that by \eqref{ineq:Gammaphiplus}, we immediately get \eqref{ineq:niestabilna1}. Moreover, by \eqref{ineq:Gammalewo}, we get \eqref{eq:niestabilna2}.

To finish the proof it is enough to show that $\xi$ is continuous. By \eqref{ineq:zbiegado0}, $\xi$ is continuous at $0$.

To obtain a contradiction, let us assume that there exists $o_1 \in \left[-\frac{11}{10}\beta,\frac{11}{10}\beta\right]\setminus \set{0}$ such that
\begin{equation}
\label{ineq:contracontinous}
\xi(o_1)\neq \lim_{o\ra o_1} \xi(o)
\end{equation}
holds.

Let $\chi$ be a solution of \eqref{eq:niestabilna1} satisfying $\chi(\tau) = o_1+i\xi(o_1)$. Let us notice that
\begin{equation}
\label{ineq:ogrchi}
|\chi(t)|\leq \frac{12}{10}\beta
\end{equation}
for every $t\geq \tau$.

We make the change of variables $\zeta = a - \chi$ and get
\begin{equation}
\label{eq:naupsilon}
\dot{\zeta}= \mathring{u}(t,\zeta) = -2R \overline{\zeta} - Re^{i\frac{t}{2}}\overline{\zeta}(\overline{\zeta} +2 \overline{\chi})+ \frac{1}{2}\zeta.
\end{equation}

We define
\begin{align*}
\widetilde{K}=& \set{\zeta\in \C: |\im[\zeta]|\leq \frac{11}{10}\beta, |\im[\zeta]|\geq |\re[\zeta]|},\\
\widehat{K}= & \set{\zeta\in \widetilde{K}: |\im[\zeta]|= \frac{11}{10}\beta}.
\end{align*}
We show that every solution $\varsigma$ of \eqref{eq:naupsilon} such that $\varsigma(\tau) \in \widetilde{K}$ leaves $\widetilde{K}$ throught $\widehat{K}$ and it happens for some $t\geq \tau$. Keeping in mind \eqref{ineq:ogrchi}, for any $\zeta\in \widetilde{K}$ let us estimate
\begin{align*}
|\im[\dot{\zeta}]| \geq & |\im[-2R \overline{\zeta}]| - \left|\im\left[ Re^{i\frac{t}{2}}\overline{\zeta}(\overline{\zeta}+ 2 \overline{\chi})\right] \right| - \frac{1}{2}|\im[\zeta]|\\
\geq & |\im[\zeta]|\left( 2R -\frac{1}{2}\right)  - R |\zeta|(|\zeta|+ 2|\chi|)\\
\geq & |\im[\zeta]|\left( 2R -\frac{1}{2} -R \sqrt{2}\frac{11\sqrt{2}+24}{10}\beta\right)\\
\geq & R |\im[\zeta]|.
\end{align*}
Let us investigate the behaviour of $\mathring{u}$ on $\partial \widetilde{K}$. We parameterize a part of $\partial \widetilde{K}$ by
\begin{equation*}
s_3:\left(0, \frac{11}{10}\beta\right] \ni \theta \mapsto \theta (1+i).
\end{equation*}
An outward orthogonal vector is given by $n_3= 1-i$. The inner product of the outward orthogonal vector and the vector field $\mathring{u}$ has the form
\begin{align*}
\langle n_3(\theta), \mathring{u}(t,s_3(\theta))\rangle =& \re\Big[(1+i)(-2)R\theta (1-i) \\
&- (1+i)(-R) e^{i\frac{t}{2}} \theta(1-i)(\theta(1-i) + 2\overline{\chi}(t))  +(1+i)^2 \theta \frac{1}{2}  \Big]\\
\leq & -4R\theta + 2 R \theta \left(\sqrt{2}\theta +\frac{24}{10}\beta\right) +\theta\\
<& 0.
\end{align*}
So on this side of boundary of $\widetilde{K}$ the vector field points inwards $\widetilde{K}$. By the symmetries of $\widetilde{K}$ and the dominating term $-2R\overline{\zeta}$ of $\mathring{u}$ close to the origin, the vector field $\mathring{u}$ points inwards $\widetilde{K}$ on the other sides of the boundary, except $\widehat{K}$. Finally, every nonzero $\varsigma$ leaves $\widetilde{K}$ for some time $t\geq \tau$ throught $\widehat{K}$.

So if \eqref{ineq:contracontinous} holds, then there exists a solution $\widetilde{\varsigma} $ of  \eqref{eq:niestabilna1} satisfying $\widetilde{\varsigma}(\tau) = o +i\xi(o)$ for some $o$ close to $o_1$ such that $\varsigma_1(\tau)= \widetilde{\varsigma}(\tau) -\chi(\tau) \in \widetilde{K}$. But, as we know,  $\varsigma_1$ must leave $\widetilde{K}$ for some time $t\geq \tau$ throught $\widehat{K}$. It means that $\widetilde{\varsigma}$ leaves $K$ for some time $t\geq \tau$ which contradicts \eqref{ineq:Gammaphiplus}. %$\square$
\end{proof}

In the following lemma we use the notation introduced in the proofs of Theorem \ref{thm:glowne} and Lemma \ref{lem:homoorbita}.

\begin{lem}
 \label{lem:przeskoki}
There exists an interval $[\mu,\nu]\subset \left[-\frac{11}{10}\beta,\frac{11}{10}\beta\right]$ such that the following conditions hold: \eqref{ineq:przedzialmunu}, exactly one out of \eqref{eq:bezobrotu} and \eqref{eq:zobrotem}, for every $l\in \set{1,2,\dots, 2N}$ exactly one out of \eqref{ineq:pom00}, \eqref{ineq:pom11}, \eqref{ineq:pom01} and \eqref{ineq:pom10}.
\end{lem}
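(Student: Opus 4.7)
The plan is to construct $[\mu,\nu]$ by a nested inductive argument. Set $[\mu_{0},\nu_{0}]=\left[-\frac{11}{10}\beta,\frac{11}{10}\beta\right]$ and consider the parameterisation $o\mapsto o+i\xi(o)$ of the local unstable manifold at time $\delta$ provided by Lemma~\ref{lem:niestabilna}; its image lies in $K$ and the two endpoints lie on the opposite vertical faces of $\partial K$. At step $l=1,\ldots,2N$ I would produce $[\mu_{l},\nu_{l}]\subset[\mu_{l-1},\nu_{l-1}]$ such that the curve $o\mapsto\psi(\delta,o+i\xi(o),t)$ restricted to $o\in[\mu_{l},\nu_{l}]$ satisfies the condition \eqref{ineq:pom00}, \eqref{ineq:pom11}, \eqref{ineq:pom01} or \eqref{ineq:pom10} dictated by the pair $(x_{-N+l-1},x_{-N+l})$, and moreover the endpoints $\mu_{l}$, $\nu_{l}$ are mapped at time $l\pi$ to the two opposite faces of the ambient trapping strip: of $\partial K$ in $w$-coordinates if $x_{-N+l}=0$, or of $\partial L$ in $p$-coordinates if $x_{-N+l}=1$. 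Then $[\mu,\nu]=[\mu_{2N},\nu_{2N}]$ automatically meets \eqref{ineq:przedzialmunu}, and the dichotomy between \eqref{eq:bezobrotu} and \eqref{eq:zobrotem} is settled by the parity of the total orientation reversal of the arc, which is determined by the number of symbol switches in the word $x_{-N}\dots x_{N-1}$.

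The selection at each inductive step rests on two ingredients. First, a hyperbolic expansion estimate: by \eqref{ineq:R} and \eqref{ineq:RN} the linear part of \eqref{eq:B} (respectively \eqref{eq:C}) dominates on $K$ (respectively on $L$), with a saddle-type linearisation whose expanding multiplier over a half-period is of order $e^{2R\pi}$, dwarfing the widths $\tfrac{22}{10}\beta$ and $4\beta^{2}$. Consequently any connected arc whose endpoints reach opposite faces of the trapping strip contains, after passing by an intermediate-value argument to a parameter sub-interval, a connected sub-arc whose endpoints again touch opposite faces at the next sampling time while the whole sub-arc stays inside the strip throughout the half period. This immediately yields the inductive step for the ``stay'' cases $0\to 0$ and $1\to 1$ and shows that the span property is preserved.

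The main obstacle is the transition cases $0\to 1$ and $1\to 0$, in which the arc must leave one neighbourhood, cross the corridor $Z$ during $t\in[(l-1)\pi,(l-1)\pi+\beta+\gamma]$, and settle in the opposite neighbourhood. Here I would exploit the fact that the slices $Z_{t}$, together with the wedge profile $l(x)$, form a heteroclinic corridor: near $t=k\pi$ the twist $e^{it}$ in the unperturbed vector field $Re^{it}(\overline{z}^{2}-1)$ brings the unstable direction at one fixed point into alignment with the stable direction at the other. I would check the required trapping by the same sign analysis of $\langle n,v\rangle$ on the lateral boundary of $Z_{t}$ along the transit window that was used in the Wa\.zewski argument of Lemma~\ref{lem:niestabilna}, possibly absorbing the perturbation $f$ via \eqref{ineq:N}--\eqref{ineq:lipschitz} and \eqref{ineq:RN}. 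This yields that any full-span arc in $K$ at time $(l-1)\pi$ contains a parameter sub-interval whose image is trapped in $Z$ throughout the transit and whose endpoints at time $(l-1)\pi+\beta+\gamma$ are pinned to opposite faces of the strip in the new coordinate system; the hyperbolic estimate of the previous paragraph then propagates the span up to time $l\pi$. Iterating this construction for $l=1,\ldots,2N$ produces $[\mu,\nu]$ with all the required properties.
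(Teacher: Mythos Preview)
Your proposal is correct and follows essentially the same strategy as the paper: a nested induction producing intervals $[\mu_l,\nu_l]\subset[\mu_{l-1},\nu_{l-1}]$ with the running invariant that the endpoints land on opposite faces of $K$ (respectively $L$) at each sampling time $l\pi$, and then $[\mu,\nu]=[\mu_{2N},\nu_{2N}]$. The paper packages the ``stay'' step as a separate topological rectangle lemma (Lemma~\ref{lem:pelnyopisU}), which needs only the sign of the field on $\partial K$ (outward on $K_1\cup K_2$, inward on $K_3\cup K_4$) rather than an explicit expansion rate; and for the transition step it gives a more concrete quantitative construction---an intermediate slab $K_{\tilde\zeta}$ with $\tilde\zeta$ from~\eqref{eq:dzeta}, an enlarged target box $\tilde L\supset L$, and the scalar comparison flow of Lemma~\ref{lem:pomrow1} to bound the transit time through $Z$---but the architecture is identical to what you outline.
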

\begin{proof}
Let
\begin{align*}
K_1=&\set{w\in K: \re w =-\frac{11}{10}\beta},\\
K_2=&\set{w\in K: \re w =\frac{11}{10}\beta},\\
K_3=&\set{w\in K: \im w =-2\beta^2},\\
K_4=&\set{w\in K: \im w =2\beta^2},\\
L_1=&\set{p\in L: \im p =-\frac{11}{10}\beta},\\
L_2=&\set{p\in L: \im p =\frac{11}{10}\beta}.
\end{align*}
We prove more than \eqref{ineq:pom00}, \eqref{ineq:pom11}, \eqref{ineq:pom01} and \eqref{ineq:pom10}. Namely, we prove that there exist sequences $\set{\mu_j}_{j=0}^{2N}$, $\set{\nu_j}_{j=0}^{2N}$ such that $\mu_0=-\frac{11}{10}\beta$, $\nu_0=\frac{11}{10}\beta$, $\mu_j< \mu_{j+1}$, $\nu_j> \nu_{j+1}$ for $j\in \set{0,1,\ldots,2N-1}$, $\mu_{2N}< \nu_{2N}$ and
\begin{align*}
&
\begin{cases}
\text{in the case \eqref{ineq:pom00} }\\
\psi(\delta,o+i\xi(o),t) \in K \text{ for every } t\in [(l-1)\pi, l\pi], o \in [\mu_l, \nu_l]\\
\text{and either } \psi(\delta,\mu_l+i\xi(\mu_l),l\pi) \in K_1, \psi(\delta,\nu_l+i\xi(\nu_l),l\pi) \in K_2\\
\text{or } \psi(\delta,\mu_l+i\xi(\mu_l),l\pi) \in K_2, \psi(\delta,\nu_l+i\xi(\nu_l),l\pi) \in K_1,
\end{cases}\\
&
\begin{cases}
\text{in the case \eqref{ineq:pom11} }\\
\tilde{\psi}(\delta,p(\Upsilon(o+i\xi(o),\delta),\delta),t) \in L \text{ for every } t\in [(l-1)\pi, l\pi], o \in [\mu_l, \nu_l]\\
\text{and either }\\
\tilde{\psi}(\delta,p(\Upsilon(\mu_l+i\xi(\mu_l),\delta),\delta),l\pi) \in L_1, \tilde{\psi}(\delta,p(\Upsilon(\nu_l+i\xi(\nu_l),\delta),\delta),l\pi) \in L_2\\
\text{or }\\
\tilde{\psi}(\delta,p(\Upsilon(\mu_l+i\xi(\mu_l),\delta),\delta),l\pi) \in L_2, \tilde{\psi}(\delta,p(\Upsilon(\nu_l+i\xi(\nu_l),\delta),\delta),l\pi) \in L_1,
\end{cases}\\
&
\begin{cases}
\text{in the case \eqref{ineq:pom01} } \text{ for every } o \in [\mu_l, \nu_l]\\
\psi(\delta,o+i\xi(o),(l-1)\pi) \in K,\\
\f(\delta,\Upsilon(o+i\xi(o),\delta),t)\in Z \text{ for every } t\in [(l-1)\pi, (l-1)\pi+\beta+\gamma],\\
\tilde{\psi}(\delta,p(\Upsilon(o+i\xi(o),\delta),\delta),t) \in p(W) \text{ for every } t\in [(l-1)\pi+\beta+\gamma, l\pi],\\
\tilde{\psi}(\delta,p(\Upsilon(o+i\xi(o),\delta),\delta),l\pi) \in L,\\
\text{and either }\\
\tilde{\psi}(\delta,p(\Upsilon(\mu_l+i\xi(\mu_l),\delta),\delta),l\pi) \in L_1, \tilde{\psi}(\delta,p(\Upsilon(\nu_l+i\xi(\nu_l),\delta),\delta),l\pi) \in L_2\\
\text{or }\\
\tilde{\psi}(\delta,p(\Upsilon(\mu_l+i\xi(\mu_l),\delta),\delta),l\pi) \in L_2, \tilde{\psi}(\delta,p(\Upsilon(\nu_l+i\xi(\nu_l),\delta),\delta),l\pi) \in L_1,
\end{cases}\\
\end{align*}

\begin{align*}
\begin{cases}
\text{in the case \eqref{ineq:pom10} } \text{ for every } o \in [\mu_l, \nu_l]\\
\tilde{\psi}(\delta,p(\Upsilon(o+i\xi(o),\delta),\delta),(l-1)\pi) \in L, \\
\f(\delta,\Upsilon(o+i\xi(o),\delta),t)\in Z \text{ for every } t\in [(l-1)\pi, (l-1)\pi+\beta+\gamma],\\
\psi(\delta,o+i\xi(o),t) \in w(U)  \text{ for every } t\in [(l-1)\pi+\beta+\gamma, l\pi],\\
\psi(\delta,o+i\xi(o),l\pi) \in K\\
\text{and either } \psi(\delta,\mu_l+i\xi(\mu_l),l\pi) \in K_1, \psi(\delta, \nu_l+i\xi(\nu_l), l\pi) \in K_2\\
\text{or } \psi(\delta,\mu_l+i\xi(\mu_l),l\pi) \in K_2, \psi(\delta,\nu_l+i\xi(\nu_l),l\pi) \in K_1
\end{cases}
\end{align*}
hold.

The case \eqref{ineq:pom00} comes directly from Lemma \ref{lem:pelnyopisU}, because for every $t\in \R$ the vector field $u(\cdot,t)$ from \eqref{eq:B} points outward $K$ on $K_1\cup K_2$ and points inward $K$ on $K_3\cup K_4$. Indeed, to see this let us fix $w\in K_1$. Then
\begin{align}
\nonumber
\re[w] =& \re\left[2R\overline{w}+ Re^{-i\frac{t}{2}}\overline{w}^2 -\frac{i}{2}w  \right]\\
\nonumber
\leq & -2R\frac{11}{10}\beta + R \left|e^{-i\frac{t}{2}}\overline{w}^2\right| +
\left|\frac{i}{2}w \right|\\
\label{ineq:pomK1}
\leq & -R\left(\frac{11}{5}\beta - \left(\frac{11}{10}\beta\right)^2 - 4\beta^4\right) + \frac{11}{10}\beta + 2 \beta^2\\
\nonumber
<&0.
\end{align}
Similar calculations show that $\re[w]>0$ for every $w\in K_2$. Let us fix now $w\in K_3$. Then
\begin{align}
\nonumber
\im[w]=& \im \left[2R\overline{w}+ Re^{-i\frac{t}{2}}\overline{w}^2 -\frac{i}{2}w  \right]\\
\nonumber
\geq & 4R\beta^2 -R \left|e^{-i\frac{t}{2}}\overline{w}^2\right| - \frac{1}{2}\re[w] \\
\label{ineq:pomK3}
\geq& R \left(4\beta^2 - \frac{121}{100}\beta^2 - 4\beta^4\right) - \frac{11}{5}\beta\\
\nonumber
\geq& R \left(\frac{279}{100}\beta^2 - 4\beta^4 - \frac{22}{5}\beta^3\right)\\
\nonumber
>&0.
\end{align}
Similar calculations show that $\im[w]<0$ for every $w\in K_4$.

The case \eqref{ineq:pom11} is very similar to the case \eqref{ineq:pom00} (we only need to interchange the horizontal and vertical directions). By the $2\pi$-periodicity of $\f$ and symmetry \eqref{eq:symmetryf}, the case \eqref{ineq:pom10} is equivalent to \eqref{ineq:pom01}, so it is enough to prove the statement in the case \eqref{ineq:pom01}.

By the $2\pi$-periodicity of $\f$, there is no loss of generality in assuming that $\delta=-\beta$ and $l=1$.

So at the beginnig $\psi(\delta,o+i\xi(o),(l-1)\pi) = \psi(-\beta,o+i\xi(o),0) \in K$ holds for every $o\in \left[\mu_{l-1},\nu_{l-1}\right]=\left[\mu_{0},\nu_{0}\right]$. Thus $\f(-\beta,\Upsilon(o+i\xi(o),-\beta),0)\in Z$ for every $o\in \left[\mu_{0},\nu_{0}\right]$.

Let
\begin{align}
\label{eq:dzeta}
\tilde{\zeta}=&\beta\frac{\left(1-e^{-4R\beta} \right)(2-\beta)}{2-\beta+e^{-4R\beta}},\\
\nonumber
K_{\tilde{\zeta}} =& \set{w \in K: -\beta\leq \re [w]\leq -\beta+\tilde{\zeta}},\\
\nonumber
\widetilde{K}_{\tilde{\zeta}} =& \set{w \in K: \re [w]= -\beta+\tilde{\zeta}},\\
\nonumber
\widetilde{K}_{\beta} =& \set{w \in K: \re [w]= -\beta},\\
\nonumber
\tilde{L}=&\set{p\in \C: |\re[p]|\leq \frac{11}{10}\beta, |\im[p]|\leq 3\beta},\\
\nonumber
\tilde{L}_1=&\set{p\in \tilde{L}: \im[p] = -3\beta},\\
\nonumber
\tilde{L}_2=&\set{p\in \tilde{L}: \im[p] = 3\beta},\\
\nonumber
\tilde{L}^u=&\set{p\in \tilde{L}: \im[p]>4\beta^2},\\
%\nonumber
%\tilde{L}^c=&\set{p\in \tilde{L}: |\im[p]|<4\beta^2},\\
\nonumber
\tilde{L}^l=&\set{p\in \tilde{L}: \im[p]<-4\beta^2}.
\end{align}
Obviously, $\tilde{\zeta}<\beta$.

We finish the proof in the following steps:
\begin{enumerate}
\item \label{en:pierwszy}
showing that for every $w\in K_{\tilde{\zeta}}$ there exists $t_w\in 
\left(
%\left[\frac{1}{2R}\ln\left(\frac{16}{3\beta^2} \right)
0, \beta+ \gamma\right]$ such that \begin{equation}
\label{ineq:pom20}
p\left(\f(-\beta, \Upsilon(w,-\beta),t_w),-\beta + t_w\right) \in \tilde{L}
\end{equation}
 and
\begin{equation}
\label{ineq:pom21}
\f(-\beta, \Upsilon(w,-\beta),t)\in Z_{-\beta + t} \text{ for every } t\in \left[0,t_w\right]
\end{equation}
hold,
\item \label{en:drugi}
showing that for every $w\in K_\beta$ the inclusion
\begin{equation*}
p\left(\f(-\beta, \Upsilon(w,-\beta),\gamma-\beta),-2\beta + \gamma\right) \in \tilde{L}^u
\end{equation*}
holds,
\item \label{en:trzeci}
showing that for every $w\in \widetilde{K}_{\tilde{\zeta}}$ the inclusion
\begin{equation*}
p\left(\f(-\beta, \Upsilon(w,-\beta),\gamma+\beta), \gamma\right) \in \tilde{L}^l
\end{equation*}
holds,
\item \label{en:czwarty}
showing that for every $\sigma\in \R$ and $p\in \tilde{L}^u$ there exists $t\in[0,\beta]$ such that $\tilde{\psi}(\sigma,p,t) \in \tilde{L}_2$,
\item \label{en:piaty}
showing that for every $\sigma\in \R$ and $p\in \tilde{L}^l$ there exists $t\in[0,\beta]$ such that $\tilde{\psi}(\sigma,p,t) \in \tilde{L}_1$,
\item \label{en:szosty}
showing that for every $\sigma\in \R$ and $z\in p\left(W_\sigma\times \set{\sigma}\right)$ there exists $t\in[0,\beta]$ such that $\tilde{\psi}(\sigma,p,t) \in L$ or $|\im[\tilde{\psi}(\sigma,p,t)]|=\frac{11}{10}\beta$,
\item \label{en:siodmy}
observing that, by above steps and Lemma \ref{lem:pelnyopisU}, for any curve contained in $K$ which connects $K_1$ and $K_2$ there must exists its connected part which is after transfer via flow contained in $L$ and connects $L_1$ and $L_2$.
\end{enumerate}

To follow the steps let us observe that, by \eqref{ineq:RN} and \eqref{ineq:N}, one gets the inequalities
\begin{align*}
\re[v(t,z)] \leq & N + R\left[ - \cos \beta +\frac{(\re z)^2}{\cos \beta}\right] \\
< & 0
\end{align*}
for every $(t,z)\in [-\beta, \beta]\times \set{z\in \C: |\re[z]|\leq 0.98, |\im[z]|\leq 1}$. It means that the vector field points to the left in the whole set. This information combined with Lemma \ref{lem:pomrow1} gives some estimates how quickly one can move from set $U$ to set $W$ through set $Z$.

%$\square$
\end{proof}

\begin{lem}
\label{lem:pelnyopisU}
Let $\alpha, \beta, \gamma, \delta, \zeta, \eta, a,b \in \R$, $\alpha < \beta$, $\gamma<\delta$, $\zeta < \eta$, $a<b$, $K=[\alpha, \beta]\times [\gamma,\delta]\subset \R^2$ and the time dependent vector field $v: \R\times \R^2\ni (t,x,y)\mapsto v(t,x,y)=(v_1(t,x,y),v_2(t,x,y))\in \R^2$ be continuous and so regular that the equation
\begin{equation*}
\begin{cases}
\dot{x}=&v_1(t,x,y),\\
\dot{y}=&v_2(t,x,y)
\end{cases}
\end{equation*}
generates a local process $\f$ on $\R^2$.
Write
\begin{align*}
K_1=&\set{(x,y)\in K:x=\alpha},\\
K_2=&\set{(x,y)\in K:x=\beta},\\
K_3=&\set{(x,y)\in K:y=\gamma},\\
K_4=&\set{(x,y)\in K:y=\delta}.
\end{align*}
Let the conditions
\begin{align}
\label{en:naK1}
v_1(t,x,y)<0 & \text{ for every } t\in \R, (x,y)\in K_1,\\
\label{en:naK2}
v_1(t,x,y)>0 & \text{ for every } t\in \R, (x,y)\in K_2,\\
\label{en:naK3}
v_2(t,x,y)>0 & \text{ for every } t\in \R, (x,y)\in K_3,\\
\label{en:naK4}
v_2(t,x,y)<0 & \text{ for every } t\in \R, (x,y)\in K_4
\end{align}
hold. Let $\xi\in \c([\zeta, \eta], K)$ be such that
\begin{align}
\label{ineq:wK1}
\xi(\zeta)\in& K_1,\\
\label{ineq:wK2}
\xi(\eta)\in& K_2
\end{align}
hold. Then there exist $\mu,\nu\in \R$, $\zeta<\mu<\nu<\eta$ such that
\begin{align*}
\f_{(a,t)}(\xi(p))\in &K \text{ for every } t\in [0,b-a], p\in [\mu,\nu],\\
\f_{(a,b-a)}(\xi(\mu))\in& K_1,\\
\f_{(a,b-a)}(\xi(\nu))\in& K_2
\end{align*}
hold.
\end{lem}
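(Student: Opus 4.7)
The plan is to exploit the strict sign conditions \eqref{en:naK1}--\eqref{en:naK4}, which make $K$ a topological block with horizontal exit sides $K_1,K_2$ and vertical entry sides $K_3,K_4$, and to pick $\mu,\nu$ through a supremum/infimum construction along the curve $\xi$. The key transversality observation used throughout is: whenever $\f_{(a,s)}(\xi(p))\in K$ for $s\in[0,t_0]$ and $\f_{(a,t_0)}(\xi(p))\in K_1$, the strict inequality $v_1<0$ on $K_1$ forces $\f_{(a,s)}(\xi(p))\notin K$ for $s\in(t_0,t_0+\e]$ for some $\e>0$, and continuous dependence on initial data produces a neighbourhood of $p$ whose orbits also cross $K_1$ transversally while having stayed in $K$ up to that crossing. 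An analogous statement holds for $K_2$ with $v_1>0$, whereas \eqref{en:naK3}--\eqref{en:naK4} prevent orbits from leaving $K$ through $K_3\cup K_4$.

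Next, I would introduce the side-exit sets
\begin{align*}
E_-&=\set{p\in[\zeta,\eta]:\exists\,t\in[0,b-a],\ \f_{(a,s)}(\xi(p))\in K\text{ for }s\in[0,t],\ \f_{(a,t)}(\xi(p))\in K_1},\\
E_+&=\set{p\in[\zeta,\eta]:\exists\,t\in[0,b-a],\ \f_{(a,s)}(\xi(p))\in K\text{ for }s\in[0,t],\ \f_{(a,t)}(\xi(p))\in K_2}.
\end{align*}
Both are closed in $[\zeta,\eta]$ by continuity of $\f$ and closedness of $K,K_1,K_2$; by \eqref{ineq:wK1}--\eqref{ineq:wK2} we have $\zeta\in E_-$ and $\eta\in E_+$ (take $t=0$). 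I set $\mu=\sup E_-$ and $\nu=\inf\set{p\in E_+:p>\mu}$ and claim that these satisfy the conclusion.

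I would then verify four things. (i) $\zeta<\mu$ and $\nu<\eta$, since the transversality observation applied at $\zeta$ produces a right neighbourhood of $\zeta$ inside $E_-$, and symmetrically at $\eta$. (ii) The witnessing time $t_\mu$ for $\mu\in E_-$ must equal $b-a$, for otherwise the same observation applied at $\mu$ gives a right neighbourhood of $\mu$ inside $E_-$, contradicting $\mu=\sup E_-$; analogously $t_\nu=b-a$. (iii) $\mu<\nu$, because $\mu=\nu$ together with (ii) would force $\f_{(a,b-a)}(\xi(\mu))\in K_1\cap K_2=\emptyset$. (iv) Every $p\in[\mu,\nu]$ has $\f_{(a,t)}(\xi(p))\in K$ for all $t\in[0,b-a]$: the endpoints are handled by (ii), and for $p\in(\mu,\nu)$ any exit from $K$ must occur through $K_1$ or $K_2$, putting $p\in E_-$ (impossible, $p>\mu$) or $p\in E_+$ (impossible, $p<\nu$), while exits through $K_3\cup K_4$ are ruled out by the inward-pointing field.

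The main obstacle is step~(ii): I must rule out that the orbit merely grazes $K_1$ tangentially at some interior time $t_\mu<b-a$ while nearby orbits miss $K_1$ entirely. This is exactly what the strict inequality $v_1<0$ on $K_1$ from \eqref{en:naK1} precludes, turning the touch into a transverse crossing that persists under perturbation of $\xi(p)$ by continuous dependence of solutions on initial conditions; possible earlier contacts with $K_3\cup K_4$ during $[0,t_\mu]$ are harmless because the strictly inward field there keeps nearby orbits inside $K$ up to the crossing time.
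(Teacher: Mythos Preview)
Your argument is correct and proceeds along a genuinely different route than the paper. The paper's proof introduces an enlarged rectangle $K^\lambda=[\alpha-\lambda,\beta+\lambda]\times[\gamma,\delta]$ on which the same sign conditions persist by continuity, then chooses $\rho>0$ so small that $\f_{(\tau,t)}(K)\subset K^\lambda$ for all $t\in[0,\rho]$ and $\tau\in[a,b]$, and reduces the statement to the case $b-a\le\rho$ by subdividing $[a,b]$ and iterating. For a single short step it sets $\mu=\sup\{p:\f_{(\tau,\rho)}(\xi(p))\in K_1\}$ and $\nu=\inf\{p\ge\mu:\f_{(\tau,\rho)}(\xi(p))\in K_2\}$ and uses the buffer strips $J_1=[\alpha-\lambda,\alpha]\times\R$, $J_2=[\beta,\beta+\lambda]\times\R$ to rule out escapes. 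Your approach bypasses the time discretisation entirely, working with the exit sets $E_\pm$ over the whole interval $[0,b-a]$ at once; the work in your step~(ii) forcing $t_\mu=t_\nu=b-a$ plays the role that the paper's single short-time step plays inductively. The paper's buffer-and-iterate device makes each continuity check concrete, while your direct method is shorter once the transversality persistence is spelled out carefully. One small point worth making explicit in your write-up: before defining $\nu$ you need $\mu<\eta$, which follows from $E_-\cap E_+=\emptyset$ (any $p\in E_-\cap E_+$ would, by the strict outward field on $K_1\cup K_2$, force $\f_{(a,t)}(\xi(p))\in K_1\cap K_2=\emptyset$) together with $\eta\in E_+$ and closedness of $E_-$.
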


\begin{proof}
Let $\lambda>0$. Write $K^\lambda=[\alpha-\lambda,\beta+\lambda]\times [\gamma,\delta]\subset \R^2$ and
\begin{align*}
K_1^\lambda =& \set{(x,y)\in K^\lambda: x=\alpha-\lambda},\\
K_2^\lambda =& \set{(x,y)\in K^\lambda: x=\beta+\lambda},\\
K_3^\lambda =& \set{(x,y)\in K^\lambda: y=\gamma},\\
K_4^\lambda =& \set{(x,y)\in K^\lambda: y=\delta}.
\end{align*}
By the continuity of $v$ and compactness of $[a,b]\times K$, there exists $\lambda>0$ such that the qualitative behaviour of $v$ on the $\partial K^\lambda$ is the same as on the $\partial K$ i.e. the vector field points inwards on $K_3^\lambda$ and $K_4^\lambda$ and points outwards on $K_1^\lambda$ and $K_2^\lambda$ (the inequalities analogous to \eqref{en:naK1}--\eqref{en:naK4} hold).

By the continuity of $\f$, compactness of $K$ and the fact that the vector field points on $K^\lambda_3$ and $K^\lambda_4$ inward $K^\lambda$, there exists $\rho>0$ such that for every $\tau\in [a,b]$ the condition
\begin{equation}
\label{ineq:fimale}
\f_{(\tau,t)}(K)\subset K^{\lambda}
\end{equation}
holds for every $t\in [0,\rho]$.

Let us fix $\tau\in [a,b)$. Since the interval $[a,b]$ can be divided into finitely many intervals which lenghts are not greater then $\rho$, it is enough to prove that there exist $\mu,\nu\in \R$, $\zeta<\mu<\nu<\eta$ such that
\begin{align}
\label{ineq:k1}
\f_{(\tau,t)}(\xi(p))\in &K \text{ for every } t\in [0,\rho], p\in [\mu,\nu],\\
\label{ineq:k2}
\f_{(\tau,\rho)}(\xi(\mu))\in& K_1,\\
\label{ineq:k3}
\f_{(\tau,\rho)}(\xi(\nu))\in& K_2
\end{align}
hold.

Write
\begin{align*}
J_1=&\set{(x,y)\in \R^2: x\in [\alpha-\lambda,\alpha]},\\
J_2=&\set{(x,y)\in \R^2: x\in [\beta, \beta+\lambda]}.
\end{align*}

Since, by \eqref{ineq:fimale}, \eqref{ineq:wK1} and \eqref{ineq:wK2}, $\f_{(\tau,\rho)}(\xi(\zeta))\in J_1$ and $\f_{(\tau,\rho)}(\xi(\eta))\in J_2$, there are points $p_1, p_2 \in (\zeta, \eta)$ such that $\f_{(\tau,\rho)}(\xi(p_1))\in K_1$ and $\f_{(\tau,\rho)}(\xi(p_2))\in K_2$.

We set
\begin{align}
\mu=&\sup\set{p\in[\zeta, \eta]: \f_{(\tau,\rho)}(\xi(p))\in K_1},\\
\nu=&\inf\set{p\in[\mu, \eta]: \f_{(\tau,\rho)}(\xi(p))\in K_2}.
\end{align}
We claim that conditions \eqref{ineq:k1}--\eqref{ineq:k3} hold.

Indeed, \eqref{ineq:k2} and  \eqref{ineq:k3} hold by the continuity of $\f$, $\xi$, compactness of $K_1$ and $K_2$, respectively.

To obtain a contradiction, we assume that \eqref{ineq:k1} does not hold. Then there exists $p\in (\mu, \nu)$ such that either $\f_{(\tau,t)}(\xi(p))\in J_1\setminus K$ or $\f_{(\tau,t)}(\xi(p))\in J_2\setminus K$ for some $t\in (0,\rho)$.

Without losing of generality we may assume that $\f_{(\tau,t)}(\xi(p))\in J_1\setminus K$ holds. Since the part of trajectory of $\xi(p)$ cannot enter $K$ and must stay in $K^\e$ for times from interval $[t,\rho]$, it must stay in $J_1$. So $\f_{(\tau,\rho)}(\xi(p))\in J_1\setminus K_1$. The first coordinate of $\f_{(\tau,\rho)}(\xi(p))$ is lower than $\alpha$ and the first coordinate of $\f_{(\tau,\rho)}(\nu)$ is equal to $\beta$, so there exists $q\in (p,\nu)$ such that the first coordinate of $\f_{(\tau,\rho)}(q)$ is equal to $\alpha$. But it means that $\f_{(\tau,\rho)}(q)\in K_1$. Since $q>\mu$ we obtain the desired contadiction.
%$\square$
\end{proof}

\begin{lem}
\label{lem:pomrow1}
Let \eqref{ineq:R}, \eqref{ineq:RN} and \eqref{eq:beta} be satisfied. The local flow $\phi$ on $\R$ generated by the equation
\begin{equation}
\label{eq:syspom1}
\dot{x}= N - R\cos \beta + \frac{R}{\cos \beta} x^2
\end{equation}
is given by
\begin{align}
\nonumber
\phi(t,x,y)= & \sqrt{\cos^2 \beta - \frac{N}{R}\cos \beta} \\
\label{eq:sysrozw1}
& \cdot \left(\frac{2}{
1 - \frac{x\sqrt{R}- \sqrt{R\cos^2 \beta - N \cos \beta}}{x\sqrt{R}+ \sqrt{R\cos^2 \beta - N \cos \beta}} \exp\left( 2 t \sqrt{R^2 - \frac{NR}{\cos \beta}}  \right)
}-1 \right).
\end{align}
\end{lem}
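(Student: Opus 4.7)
The plan is straightforward: this is an autonomous scalar Riccati equation with constant coefficients, so it is separable and admits a closed-form solution via partial fractions.

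First I would set $A = R\cos\beta - N$ and $B = R/\cos\beta$, rewriting \eqref{eq:syspom1} as $\dot x = -A + Bx^2$. The hypotheses \eqref{ineq:R}, \eqref{ineq:RN} and \eqref{eq:beta} are used only to ensure $A>0$ (since $R\cos\beta \geq 100N\cos(0.01) > N$), so that the quantity $k := \sqrt{A/B} = \sqrt{(R\cos^2\beta - N\cos\beta)/R}$ is a well-defined positive real number, coinciding with the magnitude of the two equilibria $\pm k$ of the equation. Then $\dot x = B(x-k)(x+k)$.

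Next I would separate variables and use the partial fraction decomposition
\[
\frac{1}{B(x-k)(x+k)} = \frac{1}{2Bk}\left(\frac{1}{x-k} - \frac{1}{x+k}\right),
\]
integrate from $0$ to $t$ (with initial value $x$), and arrive at
\[
\frac{x(t)-k}{x(t)+k} = \frac{x-k}{x+k}\,\exp(2Bkt).
\]
A short simplification gives $2Bk = 2R\sqrt{1 - N/(R\cos\beta)} = 2\sqrt{R^2 - NR/\cos\beta}$, which is precisely the exponent appearing in \eqref{eq:sysrozw1}. Solving the above linear-fractional relation for $x(t)$ yields $x(t) = k\frac{1+DE}{1-DE}$ with $D = (x-k)/(x+k)$ and $E = \exp(2t\sqrt{R^2 - NR/\cos\beta})$, and I would then rewrite $\frac{1+DE}{1-DE} = \frac{2}{1-DE} - 1$ to match the stated form.

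Finally I would clear the $\sqrt{R}$ inside $k$ by multiplying numerator and denominator of $D$ by $\sqrt{R}$, turning $D$ into $\frac{x\sqrt{R} - \sqrt{R\cos^2\beta - N\cos\beta}}{x\sqrt{R} + \sqrt{R\cos^2\beta - N\cos\beta}}$, and check the initial condition $\phi(0,x) = x$ by direct substitution to confirm the algebra. There is no real obstacle here; the only mild bookkeeping point is rewriting $k$ and $2Bk$ in the exact form displayed in \eqref{eq:sysrozw1}, and verifying strict positivity of $R\cos^2\beta - N\cos\beta$ from the standing hypotheses so that all radicals are real.
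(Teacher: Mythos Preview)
Your proposal is correct and is exactly what the paper has in mind: the paper's own proof reads, in its entirety, ``The proof is a matter of straightforward computation and is left to the reader.'' You have simply carried out that computation via separation of variables and partial fractions, and your bookkeeping of $k=\sqrt{\cos^2\beta-\frac{N}{R}\cos\beta}$ and $2Bk=2\sqrt{R^2-\frac{NR}{\cos\beta}}$ matches the displayed formula.
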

\begin{proof}
The proof is a matter of straightforward computation and is left to the reader.
 %$\square$
\end{proof}

\begin{lem}
\label{lem:jedendojeden}
The equation \eqref{eq:moc} holds.
\end{lem}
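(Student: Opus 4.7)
The plan is to show that $\Phi^{-1}(\{0^\infty\})$ is exactly the singleton $\{q_0\}$, where $q_0$ is the fixed point of $\f_{(-\beta,2\pi)}$ continuing the unperturbed periodic solution $q\equiv 1$. Existence of $q_0$ is standard: in the $w$-coordinates \eqref{eq:zmianaB} the linearisation of \eqref{eq:B} at $w\equiv 0$ is the hyperbolic saddle $\dot w=2R\bar w-\tfrac i2 w$, whose time-$2\pi$ multipliers are of order $e^{\pm 4R\pi}$, so the implicit function theorem applied to $\f_{(-\beta,2\pi)}-\id$ produces (and determines uniquely near $1$) the continuation $q_0$.

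A combinatorial reduction comes first. Because $a=0.7$ satisfies $a\sqrt{2}<1$, the enclosing disks $\Sp{B}(\pm 1,a\sqrt{2})$ of the rotated squares $U_t$ and $W_t$ are disjoint, so $U_t\cap W_t=\emptyset$ for every $t$. Suppose $\Phi(q)=0^\infty$: case \eqref{ineq:10} at an index $k$ would require $\f_{(-\beta,k\pi)}(q)\in W$, but whichever of \eqref{ineq:00} or \eqref{ineq:10} realises $[\Phi(q)]_{k-1}=0$ forces $\f_{(-\beta,k\pi)}(q)\in U$. So case \eqref{ineq:10} never occurs, only \eqref{ineq:00}, giving $\f_{(-\beta,t)}(q)\in U$ for every $t\in\R$; in the coordinates \eqref{eq:zmianaB} the $\psi$-orbit of $w_0=(q-1)e^{i\beta/2}$ lies in $w(U)=\set{w\in\C:|\re[w]|\le a,\;|\im[w]|\le a}$ for all $t\in\R$.

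The main step is uniqueness of such $w_0$. Set $\zeta=w-\chi$ with $\chi$ the $w$-coordinate periodic continuation of $0$; then \eqref{eq:B} becomes $\dot\zeta=2R\bar\zeta+Re^{-it/2}\bar\zeta(\bar\zeta+2\bar\chi)-\tfrac i2\zeta+\Delta(t,\zeta)$ with $|\Delta(t,\zeta)|\le N|\zeta|$. Introduce the cones $C^u=\set{\zeta\in\C:|\im[\zeta]|\le|\re[\zeta]|}$ and $C^s=\set{\zeta\in\C:|\re[\zeta]|\le|\im[\zeta]|}$. A direct inner-product calculation, paralleling \eqref{ineq:Gammalewo} and the $\widetilde K$-estimate at the end of the proof of Lemma \ref{lem:niestabilna} and using $R\ge 100$, $R\ge 100N$, $a=0.7$ and $|\chi|=\mathcal{O}(N/R)$, establishes two facts: (i) on the diagonal $|\re[\zeta]|=|\im[\zeta]|$ the vector field points strictly into $\inter C^u$, so $C^u$ is positively invariant; and (ii) inside $C^u$ one has $\re[\zeta]\cdot\tfrac{d}{dt}\re[\zeta]\ge c R(\re[\zeta])^2$ for some $c>0$, so $|\re[\zeta](t)|$ grows exponentially until $\zeta+\chi$ escapes $w(U)$. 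The same estimates applied to the time-reversed equation give the symmetric conclusions for $C^s$ with $\im$ replacing $\re$ and backward time.

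Because $C^u\cup C^s=\C$, any non-trivial $\zeta$ either (a) enters $\inter C^u$ at some moment, directly or immediately upon reaching the diagonal, and then (i)--(ii) drive $w=\zeta+\chi$ out of $w(U)$ in finite forward time, or (b) stays in $\inter C^s$ forever forward in time, meaning it lies on the local stable manifold of $\chi$; in case (b) the symmetric estimate drives $w$ out of $w(U)$ in finite backward time. Both possibilities contradict $w\in w(U)$ for all $t\in\R$, so $\zeta\equiv 0$ and $q=q_0$, proving \eqref{eq:moc}. I expect the main difficulty to be verifying the cone estimates \emph{uniformly on $w(U)$} rather than only near $\chi$, because the quadratic term $R\bar\zeta^2$ is of the same order of magnitude as the linear $2R\bar\zeta$ near the outer boundary; the explicit bound $|\re[\zeta]|\le a+|\chi|<1$ secures $2R(1-\re[\zeta])\ge c R$ with $c>0$, and the hypothesis $R\ge 100N$ then swallows the Lipschitz perturbation.
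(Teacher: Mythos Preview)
Your proposal is correct and follows essentially the same approach as the paper: reduce $\Phi(q)=0^\infty$ to the statement that the $\psi$-orbit stays in $w(U)$ for all time, pass to coordinates centred at the periodic continuation $\chi$ (the paper's $\kappa$), and use that the vector field is dominated by the hyperbolic saddle term $2R\bar\zeta$ to conclude uniqueness. The paper's proof is a bare sketch (``dominating term $2R\overline{y}$ \ldots\ situation is qualitatively the same''), whereas you supply the missing pieces explicitly---the combinatorial argument excluding case~\eqref{ineq:10}, and the cone estimates (i)--(ii) replacing the paper's appeal to ``qualitative behaviour of a saddle''---so your write-up is more complete but not genuinely different in strategy.
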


\begin{proof}
We use the notation from Lemma \ref{lem:homoorbita}.

Let us assume that $f\equiv 0$. 

Since the vector field $u$ has a dominating term $2R\overline{w}$ inside the set $\R\times w(U)$ the qualitative behaviour of $u$ is the same as $\R\times w(U)$ (saddle node). So the only solution staying in $w(U)$ for all times is the trivial one.

When $f\equiv 0$ is no longer valid then the trivial solution of \eqref{eq:B} continues to the periodic one $\kappa$. By \eqref{ineq:RN} and \eqref{ineq:N}, it can be shown that $|\re[\kappa]| < 0.0051$ and $|\im[\kappa]| < 0.0051$.

Now by the change of variables $y=q-\kappa(t)$ the equation \eqref{eq:B} has the form 
\begin{align}
\label{eq:D}
\dot{y} = & m(t,y) = 2R\overline{y} + Re^{-i\frac{t}{2}}\overline{y}^2 + 2Re^{-i\frac{t}{2}}\overline{y}\overline{\kappa} -\frac{i}{2}y \\ 
\nonumber 
&+ e^{-i\frac{t}{2}}\left[ f(t,(y+\kappa)e^{i\frac{t}{2}}+1)-f(t,\kappa e^{i\frac{t}{2}}+1)
\right].
\end{align}

The dominating term of the vector field $m$ is $2R\overline{y}$ so, by \eqref{ineq:lipschitz}, situation is qualitatively the same as in the case of $f\equiv 0$.

 %$\square$

\end{proof}

\begin{lem}
\label{lem:hetero}
Let $(X,d)$, $(Y,\rho)$ be compact metric spaces, $f\in \c(X)$, $g\in \c(Y)$ be homeomorphisms and $\Phi \in \c(X,Y)$ be a semiconjugacy between $f$ and $g$. Let $y_1, y_2 \in \Per(g)$ be such that
\begin{equation}
\label{eq:hetero}
\Phi^{-1}(\{y_i\})=\set{o_i}
\end{equation}
holds for $i\in \set{1,2}$ where $\set{o_1, o_2}\subset \Per(f)$. Let $y\in Y$ be such that $\alpha_g(y)=\orb(y_1,g)$ and $\omega_g(y)=\orb(y_2,g)$ hold. Then every point $o\in \Phi^{-1}(\{y\})$ satisfies $\alpha_f(o)=\orb(o_1,f)$ and $\omega_f(o)=\orb(o_2,f)$.
\end{lem}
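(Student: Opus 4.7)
The plan is to use the semiconjugacy equation $\Phi\circ f=g\circ\Phi$ (and its inverse form $\Phi\circ f^{-1}=g^{-1}\circ\Phi$, which follows since $f$ and $g$ are homeomorphisms) to transport the limit behaviour of $\{g^n(y)\}$ to the limit behaviour of $\{f^n(o)\}$, and to exploit compactness of $X$ together with the uniqueness assumption \eqref{eq:hetero} to move back from $Y$ to $X$. I only argue the $\omega$-limit statement; the $\alpha$-limit statement is the identical argument applied to $(f^{-1},g^{-1})$.

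First I would upgrade the single-point preimage hypothesis to the whole orbit. Fix $m\in\Z$; if $\Phi(x)=g^m(y_2)$ for some $x\in X$, then $\Phi(f^{-m}(x))=g^{-m}(\Phi(x))=y_2$, so $f^{-m}(x)=o_2$ by \eqref{eq:hetero}, hence $x=f^m(o_2)$. Thus
\begin{equation*}
\Phi^{-1}(\{g^m(y_2)\})=\{f^m(o_2)\}\quad\text{for every }m\in\Z,
\end{equation*}
and in particular $\Phi^{-1}(\orb(y_2,g))=\orb(o_2,f)$.

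Next I prove the inclusion $\omega_f(o)\subset\orb(o_2,f)$. Let $z\in\omega_f(o)$ and take $n_k\to\infty$ with $f^{n_k}(o)\to z$. By continuity of $\Phi$ and the semiconjugacy relation,
\begin{equation*}
\Phi(z)=\lim_{k\to\infty}\Phi(f^{n_k}(o))=\lim_{k\to\infty}g^{n_k}(y),
\end{equation*}
so $\Phi(z)\in\omega_g(y)=\orb(y_2,g)$. By the preceding paragraph $z\in\orb(o_2,f)$.

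For the reverse inclusion $\orb(o_2,f)\subset\omega_f(o)$, fix $m\in\Z$. Since $g^m(y_2)\in\omega_g(y)$, there is a sequence $n_k\to\infty$ with $g^{n_k}(y)\to g^m(y_2)$, i.e.\ $\Phi(f^{n_k}(o))\to g^m(y_2)$. By compactness of $X$, passing to a subsequence we may assume $f^{n_k}(o)\to z^*$ for some $z^*\in X$; then $\Phi(z^*)=g^m(y_2)$, whence $z^*=f^m(o_2)$ by the first step. Therefore $f^m(o_2)\in\omega_f(o)$, and taking $m$ arbitrary gives the inclusion.

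The only mild subtlety is the first step (turning a single-point preimage over each periodic point into a single-point preimage over its whole orbit) and the use of compactness of $X$ to extract a convergent subsequence in the reverse inclusion; nothing here is genuinely delicate, since all nonlinear dynamics has already been absorbed into the hypotheses.
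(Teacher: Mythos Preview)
Your argument is correct and follows essentially the same route as the paper: both first upgrade \eqref{eq:hetero} to $\Phi^{-1}(\orb(y_i,g))=\orb(o_i,f)$, then push limit points through $\Phi$ to obtain $\omega_f(o)\subset\orb(o_2,f)$ (resp.\ the $\alpha$-version). The only difference is that the paper asserts the reverse inclusion ``immediately'' (implicitly using that $\omega_f(o)$ is a nonempty invariant subset of a finite periodic orbit), whereas you prove it directly via a compactness/subsequence argument; your version is slightly more explicit but not a different idea.
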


\begin{proof}
I. Let us fio $y\in Y$ such that $\alpha_g(y)=\orb(y_1,g)$ holds and $o\in \Phi^{-1}(\{y\})$. We show that $\alpha_f(o)=\orb(o_1,f)$.

Let $n\in \N$ be period of $y_1$. It easy to see, that by \eqref{eq:hetero}, point $o_1$ is also $n$-periodic and
\begin{equation}
\label{eq:hetero1}
\Phi^{-1}(\orb(y_1,g))=\orb(o_1,f)
\end{equation}
holds.

To obtain a contradiction, let us assume that there eoists $p\in \alpha_f(o)$ such that $p\not \in \orb(o_1,f)$. It means that there eoists sequence $\set{k_j}_{j\in \N}\subset \Z\setminus \N$ such that $\lim_{j\to \infty} f^{k_j}(o)=p$ holds. But
\begin{align*}
\Phi(p)=&\Phi\left(\lim_{j\to \infty} f^{k_j}(o)\right)= \lim_{j\to \infty} \Phi\left(f^{k_j}(o)\right)= \lim_{j\to \infty} (g^{k_j}\circ\Phi)(o) \\
=& \lim_{j\to \infty} g^{k_j}(y) \in \alpha_g(y)=\orb(y_1,g)
\end{align*}
which contradict \eqref{eq:hetero1}. Finally, $\alpha_f(o) \subset \orb(o_1,f)$, which immediately gives $\alpha_f(o) = \orb(o_1,f)$.

II. Let us now fio $y\in Y$ such that $\omega_g(y)=\orb(y_2,g)$ holds and $o\in \Phi^{-1}(\{y\})$. To finish the proof it is enough to show that $\omega_f(o)=\orb(o_1,f)$. The proof is similar to the one from part I.
%$\square$
\end{proof}

\section{Further remarks}

Symmetry \eqref{eq:symmetryf} is not essential in our investigations. It only simplifies calculations.

\section{Acknowledgements}
This paper is supported by the Faculty of Mathematics and Information Science, Warsaw University of Technology grant No. 504/02482/1120 for 2016 year.

%\bibliography{db012}
%\bibliographystyle{elsarticle-num}
\bibliographystyle{plain}
\def\cprime{$'$} \def\cprime{$'$}
  \def\soft#1{\leavevmode\setbox0=\hbox{h}\dimen7=\ht0\advance \dimen7
  by-1ex\relax\if t#1\relax\rlap{\raise.6\dimen7
  \hbox{\kern.3ex\char'47}}#1\relax\else\if T#1\relax
  \rlap{\raise.5\dimen7\hbox{\kern1.3ex\char'47}}#1\relax \else\if
  d#1\relax\rlap{\raise.5\dimen7\hbox{\kern.9ex \char'47}}#1\relax\else\if
  D#1\relax\rlap{\raise.5\dimen7 \hbox{\kern1.4ex\char'47}}#1\relax\else\if
  l#1\relax \rlap{\raise.5\dimen7\hbox{\kern.4ex\char'47}}#1\relax \else\if
  L#1\relax\rlap{\raise.5\dimen7\hbox{\kern.7ex
  \char'47}}#1\relax\else\message{accent \string\soft \space #1 not
  defined!}#1\relax\fi\fi\fi\fi\fi\fi} \def\cprime{$'$}
  \def\polhk#1{\setbox0=\hbox{#1}{\ooalign{\hidewidth
  \lower1.5ex\hbox{`}\hidewidth\crcr\unhbox0}}}
  \def\polhk#1{\setbox0=\hbox{#1}{\ooalign{\hidewidth
  \lower1.5ex\hbox{`}\hidewidth\crcr\unhbox0}}}
  \def\polhk#1{\setbox0=\hbox{#1}{\ooalign{\hidewidth
  \lower1.5ex\hbox{`}\hidewidth\crcr\unhbox0}}}
  \def\soft#1{\leavevmode\setbox0=\hbox{h}\dimen7=\ht0\advance \dimen7
  by-1ex\relax\if t#1\relax\rlap{\raise.6\dimen7
  \hbox{\kern.3ex\char'47}}#1\relax\else\if T#1\relax
  \rlap{\raise.5\dimen7\hbox{\kern1.3ex\char'47}}#1\relax \else\if
  d#1\relax\rlap{\raise.5\dimen7\hbox{\kern.9ex \char'47}}#1\relax\else\if
  D#1\relax\rlap{\raise.5\dimen7 \hbox{\kern1.4ex\char'47}}#1\relax\else\if
  l#1\relax \rlap{\raise.5\dimen7\hbox{\kern.4ex\char'47}}#1\relax \else\if
  L#1\relax\rlap{\raise.5\dimen7\hbox{\kern.7ex
  \char'47}}#1\relax\else\message{accent \string\soft \space #1 not
  defined!}#1\relax\fi\fi\fi\fi\fi\fi}
  \def\soft#1{\leavevmode\setbox0=\hbox{h}\dimen7=\ht0\advance \dimen7
  by-1ex\relax\if t#1\relax\rlap{\raise.6\dimen7
  \hbox{\kern.3ex\char'47}}#1\relax\else\if T#1\relax
  \rlap{\raise.5\dimen7\hbox{\kern1.3ex\char'47}}#1\relax \else\if
  d#1\relax\rlap{\raise.5\dimen7\hbox{\kern.9ex \char'47}}#1\relax\else\if
  D#1\relax\rlap{\raise.5\dimen7 \hbox{\kern1.4ex\char'47}}#1\relax\else\if
  l#1\relax \rlap{\raise.5\dimen7\hbox{\kern.4ex\char'47}}#1\relax \else\if
  L#1\relax\rlap{\raise.5\dimen7\hbox{\kern.7ex
  \char'47}}#1\relax\else\message{accent \string\soft \space #1 not
  defined!}#1\relax\fi\fi\fi\fi\fi\fi}
  \def\soft#1{\leavevmode\setbox0=\hbox{h}\dimen7=\ht0\advance \dimen7
  by-1ex\relax\if t#1\relax\rlap{\raise.6\dimen7
  \hbox{\kern.3ex\char'47}}#1\relax\else\if T#1\relax
  \rlap{\raise.5\dimen7\hbox{\kern1.3ex\char'47}}#1\relax \else\if
  d#1\relax\rlap{\raise.5\dimen7\hbox{\kern.9ex \char'47}}#1\relax\else\if
  D#1\relax\rlap{\raise.5\dimen7 \hbox{\kern1.4ex\char'47}}#1\relax\else\if
  l#1\relax \rlap{\raise.5\dimen7\hbox{\kern.4ex\char'47}}#1\relax \else\if
  L#1\relax\rlap{\raise.5\dimen7\hbox{\kern.7ex
  \char'47}}#1\relax\else\message{accent \string\soft \space #1 not
  defined!}#1\relax\fi\fi\fi\fi\fi\fi}

\end{document}